\newcommand*{\mailto}[1]{\href{mailto:#1}{\nolinkurl{#1}}}
\newtheorem{theorem}{Theorem}[section]
\newtheorem{lemma}[theorem]{Lemma}
\newtheorem{remark}[theorem]{Remark}
\newtheorem{proposition}[theorem]{Proposition}
\numberwithin{equation}{section}
\newtheorem{definition}[theorem]{Definition}
\newcommand{\act}{\bullet}
\newcommand{\Linf}{{L^\infty}}
\newcommand{\Wlocun}{{W^{1,1}_{\text{\rm loc}}}}
\newcommand{\Wperun}{{W^{1,1}_{\text{\rm per}}}}
\newcommand{\Wper}{{W^{1,\infty}_{\text{\rm per}}}}
\newcommand{\quot}{{\F/\Gr}}
\renewcommand{\H}{\mathcal{H}}
\newcommand{\epsi}{\varepsilon}
\newcommand{\Gr}{G}
\newcommand{\tnorm}[1]{\vert\hspace*{-1pt}\vert\hspace*{-1pt}\vert#1\vert\hspace*{-1pt}\vert\hspace*{-1pt}\vert}
\newcommand{\PP}{\mathcal{P}}
\newcommand{\Q}{\mathcal{Q}}
\newcommand{\Linfper}{{L^\infty_{\text{\rm per}}}}
\newcommand{\dott}{\,\cdot\,}
\newcommand{\dx}{\,dx}
\newcommand{\D}{\ensuremath{\mathcal{D}}}
\newcommand{\F}{\ensuremath{\mathcal{F}}}
\newcommand{\Real}{\mathbb{R}}
\newcommand{\N}{\mathbb{N}}
\DeclareMathOperator{\id}{id}
\newcommand{\muac}{\mu_{\text{\rm ac}}}
\DeclareMathOperator{\sign}{sign}
\DeclareMathOperator{\meas}{meas}
\newcommand{\beq}{\begin{equation}}
  \newcommand{\eeq}{\end{equation}}
\newcommand{\bal}{\begin{align}}
  \newcommand{\eal}{\end{align}}
\newcommand{\nn}{\nonumber}
\newcommand{\norm}[1]{\left\Vert#1\right\Vert}
\newcommand{\abs}[1]{\left\vert#1\right\vert}
\numberwithin{equation}{section}
\begin{document}

\title[Lipschitz metric for the periodic  Camassa--Holm
equation]{Lipschitz metric for the periodic Camassa--Holm
  equation}

\author[K. Grunert]{Katrin Grunert}
\address{Faculty of Mathematics\\ University of
  Vienna\\ Nordbergstrasse 15\\ A-1090 Wien\\
  Austria}
\email{\mailto{katrin.grunert@univie.ac.at}}
\urladdr{\url{http://www.mat.univie.ac.at/~grunert/}}

\author[H. Holden]{Helge Holden}
\address{Department of Mathematical Sciences\\
  Norwegian University of Science and Technology\\
  NO-7491 Trondheim\\ Norway\\ {\rm and} Centre of
  Mathematics for Applications\\ University of Oslo\\
  NO-0316 Oslo\\ Norway}
\email{\mailto{holden@math.ntnu.no}}
\urladdr{\url{http://www.math.ntnu.no/~holden/}}

\author[X. Raynaud]{Xavier Raynaud}
\address{Centre of Mathematics for Applications\\
  University of Oslo\\ NO-0316 Oslo\\ Norway}
\email{\mailto{xavierra@cma.uio.no}}
\urladdr{\url{http://folk.uio.no/xavierra/}}

\date{\today} 
\thanks{Research supported by the
  Research Council of Norway under Projects
  No.~195792/V11, Wavemaker, and NoPiMa.}  
  
\subjclass[2010]{Primary:  35Q53, 35B35; Secondary: 35B20}
\keywords{Camassa--Holm equation, Lipschitz
  metric, conservative solutions}

\begin{abstract}
  We study stability of conservative solutions of the Cauchy
  problem for the periodic Camassa--Holm equation
  $u_t-u_{xxt}+3uu_x-2u_xu_{xx}-uu_{xxx}=0$ with
  initial data $u_0$.  In particular, we derive a
  new Lipschitz metric $d_\D$ with the
  property that for two solutions $u$ and $v$ of
  the equation we have
  $d_\D(u(t),v(t))\le e^{Ct}
  d_\D(u_0,v_0)$.  The relationship between this metric and usual norms in $H^1_{\rm per}$ and  $L^\infty_{\rm per}$ is clarified.  
\end{abstract}
\maketitle

\section{Introduction}

The ubiquitous Camassa--Holm (CH) equation \cite{CH:93,CHH:94}
\begin{equation} \label{eq:CH}
 u_t-u_{xxt}+\kappa u_x
 +3uu_x-2u_xu_{xx}-uu_{xxx}=0,
\end{equation}
where $\kappa\in\Real$ is a constant, has been
extensively studied due to its many intriguing
properties. The aim of this paper is to construct
a metric that renders the flow generated by the
Camassa--Holm equation Lipschitz continuous on a
function space in the conservative case. To keep
the presentation reasonably short, we restrict the
discussion to properties relevant for the current
study.

More precisely, we consider the initial value
problem for \eqref{eq:CH} with periodic initial
data $u|_{t=0}=u_0$. Since the function $v(t,x)=u(t,x-\kappa t/2)+\kappa/2$ satisfies equation 
\eqref{eq:CH} with $\kappa=0$, we can without loss of generality assume that $\kappa$ vanishes.  For convenience we assume
that the period is $1$, that is, $u_0(x+1)=u_0(x)$
for $x\in\Real$.  The natural norm for this
problem is the usual norm in the Sobolev space
$H^1_{\rm per}$ as we have that
\begin{equation} \label{eq:H1}
\frac{d}{dt}\norm{u(t)}^2_{H^1_{\rm per}}= 
\frac{d}{dt}\int_0^1\big(u^2+u_x^2 \big)dx=2\int_0^1\big(uu_t+u_x u_{xt} \big)dx
=0
\end{equation}
(by using the equation and several integration by
parts as well as periodicity) for smooth solutions
$u$. Even for smooth initial data, the solutions
may develop singularities in finite time and this breakdown of
solutions is referred to as wave breaking.  At wave breaking the $H^1$ and
$L^\infty$ norms of the solution remain finite
while the spatial derivative $u_x$ becomes
unbounded pointwise. This phenomenon can best be
described for a particular class of solutions,
namely the multipeakons.  For simplicity we
describe them on the full line, but similar
results can be described in the periodic
case. Multipeakons are solutions of the form (see
also \cite{HolRay:06b})
\begin{equation}
\label{eq:chP}
u(t,x)=\sum_{i=1}^{n} p_i(t)e^{-\abs{x-q_i(t)}}.
\end{equation}
Let us consider the case with $n=2$ and one peakon
$p_1(0)>0$ (moving to the right) and one
antipeakon $p_2(0)<0$ (moving to the left). In the
symmetric case ($p_1(0)=-p_2(0)$ and
$q_1(0)=-q_2(0)<0$) the solution $u$ will vanish
pointwise at the collision time $t^*$ when
$q_1(t^*)=q_2(t^*)$, that is, $u(t^*,x)=0$ for all
$x\in \Real$. Clearly the well-posedness, in
particular, Lipschitz continuity, of the solution
is a delicate matter. Consider, e.g., the
multipeakon $u^\epsi$ defined as
$u^\epsi(t,x)=u(t-\epsi,x)$, see Figure
\ref{fig:peakcol}.  For simplicity, we assume that
$\norm{u(0)}_{H^1}=1$. Then, we have
\begin{equation*}
\text{$\lim_{\epsi\to0}\norm{u(0)-u^\epsi(0)}_{H^1}=0$ and 
$\norm{u(t^*)-u^\epsi(t^*)}_{H^1}=\norm{u^\epsi(t^*)}_{H^1}=1$},
\end{equation*}
and the flow is clearly not Lipschitz continuous
with respect to the $H^1$ norm.
\begin{figure}
  \includegraphics[width=6.8cm]{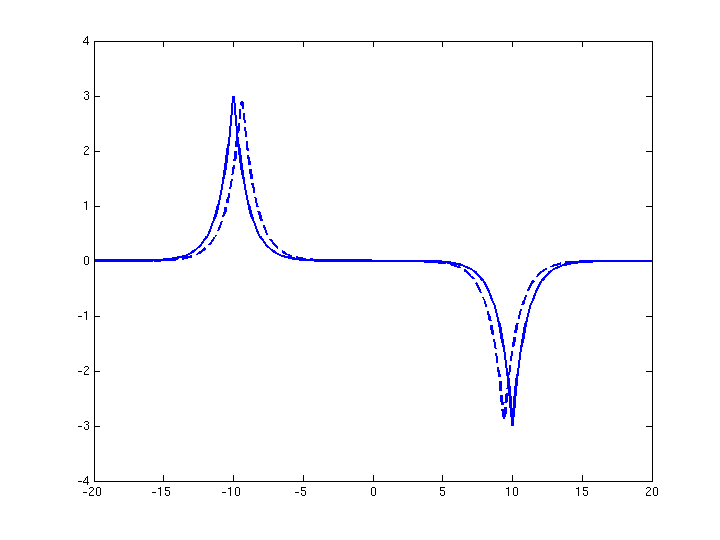}
  \includegraphics[width=6.8cm]{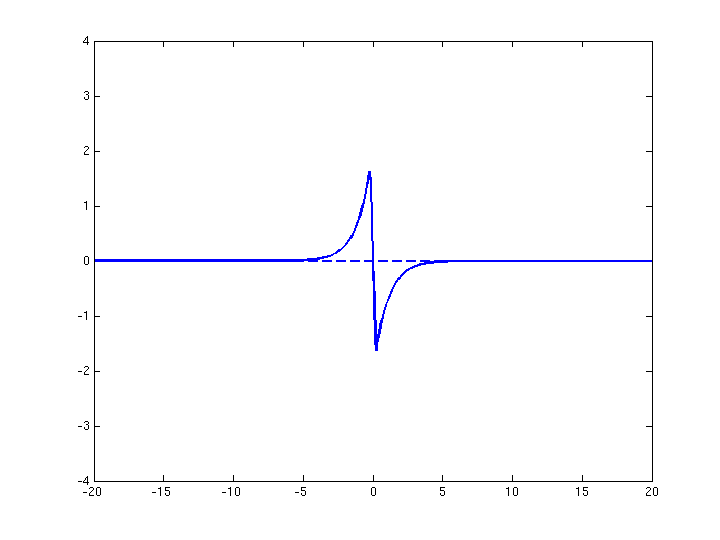}
  \label{fig:peakcol}
  \caption{The dashed curve depicts the antisymmetric multipeakon solution
    $u(t,x)$, which vanishes at $t^*$,  for $t=0$ (on the left) and
    $t=t^*$ (on the right). The solid curve depicts 
     the multipeakon solution given by
    $u^\epsi(t,x)=u(t-\epsi,x)$.}
\end{figure}

Our task is here to identify a metric, which we
will denote by $d_\D$ for which conservative
solutions satisfy a Lipschitz property, that is,
if $u$ and $v$ are two solutions of the
Camassa--Holm equation, then
\begin{equation*}
  d_\D(u(t),v(t))\le C_T d_\D(u_0,v_0), \quad t\in [0,T]
\end{equation*}
for any given, positive $T$. For nonlinear partial
differential equations this is in general a quite
nontrivial issue. Let us illustrate it in the case
of hyperbolic conservation laws
\begin{equation*}
  u_t+f(u)_x=0, \quad u|_{t=0}=u_0.
\end{equation*} 
In the scalar case with $u=u(x,t)\in\Real$,
$x\in\Real$, it is well-known \cite{HR} that the
solution is $L^1$-contractive in the sense that
\begin{equation*}
  \norm{u(t)-v(t)}_{L^1(\Real)}\le \norm{u_0-v_0}_{L^1(\Real)}, \quad t\in[0,\infty).
\end{equation*} 
In the case of systems, i.e., for $u\in\Real^n$
with $n>1$ it is known \cite{HR} that
\begin{equation*}
  \norm{u(t)-v(t)}_{L^1(\Real)}\le C\norm{u_0-v_0}_{L^1(\Real)}, \quad t\in[0,\infty),
\end{equation*} 
for some constant $C$. More relevant for the
current study, but less well-known, is the recent
analysis \cite{BHR} of the Hunter--Saxton (HS)
equation
\begin{equation}
  \label{eq:hs0}
  u_t+uu_x=\frac14\Big(\int_{-\infty}^xu_x^2\,dx-\int_{x}^\infty
  u_x^2\,dx\Big), \quad u|_{t=0}=u_0,
\end{equation}
or alternatively
\begin{equation}
  (u_t+uu_x)_x=\frac12 u_x^2, \quad u|_{t=0}=u_0,
\end{equation}
which was first introduced in \cite{MR1135995} as
a model for liquid crystals.  Again the equation
enjoys wave breaking in finite time and the
solutions are not Lipschitz in term of convex
norms. The Hunter--Saxton equation can in some sense
be considered as a simplified version of the
Camassa--Holm equation, and the construction of the
semigroup of solutions via a change of coordinates
given in \cite{BHR} is very similar to
the one used here and in
\cite{HolRay:07a} for the Camassa--Holm
equation. In \cite{BHR} the authors constructed a
Riemannian metric which renders the conservative flow generated
by the Hunter--Saxton equation Lipschitz
continuous on an appropriate function space.

For the Camassa--Holm equation, the problem of
continuation beyond wave breaking has been
considered by Bressan and Constantin
\cite{MR2278406,BreCons:05a} and Holden and
Raynaud \cite{HolRay:07,HolRay:07a,HolRay:09} (see
also Xin and Zhang \cite{xin_zhang,xin_zhang1} and
Coclite, Karlsen, and Holden \cite{CHK1,CHK2}).
Both approaches are based on a reformulation
(distinct in the two approaches) of the
Camassa--Holm equation as a semilinear system of
ordinary differential equations taking values in a
Banach space. This formulation allows one to
continue the solution beyond collision time,
giving either a global conservative solution where
the energy is conserved for almost all times or a
dissipative solution where energy may vanish from
the system. Local existence of the semilinear
system is obtained by a contraction argument.
Going back to the original function $u$, one
obtains a global solution of the Camassa--Holm
equation.

In \cite{BreFon:05}, Bressan and Fonte introduce a
new distance function $J(u,v)$ which is defined as
a solution of an optimal transport problem. They
consider two multipeakon solutions $u(t)$ and
$v(t)$ of the Camassa--Holm equation and prove, on the intervals of times where no collisions
occur, that the growth of $J(u(t),v(t))$ is linear
(that is, $\frac{dJ}{dt}(u(t),v(t))\leq
CJ(u(t),v(t))$ for some fixed constant $C$) and
that  $J(u(t),v(t))$ is
continuous across collisions. It follows that
\begin{equation}
  \label{eq:LipJ}
  J(u(t),v(t))\leq e^{CT}J(u(0),v(0))
\end{equation}
for all times $t$ that are not collision times
and, in particular, for almost all times. By
density, they construct solutions for any initial
data (not just the multipeakons) and the Lipschitz
continuity follows from \eqref{eq:LipJ}. As in
\cite{BreFon:05}, the goal of this article is to
construct a metric which makes the flow Lipschitz
continuous. However, we base the construction of
the metric directly on the reformulation of the
equation which is used to construct the solutions
themselves, and we use some fundamental geometrical
properties of this reformulation (relabeling
invariance, see below). The metric is defined on
the set $\D$ which includes configurations where
part of the energy is concentrated on sets of
measure zero; a natural choice for conservative solutions. In particular, we
obtain that the Lipschitz continuity holds for all
times and not just for almost all times as in
\cite{BreFon:05}.

Let us describe in some detail the approach in
this paper, which follows \cite{HolRay:07a} quite
closely in setting up the reformulated equation.
Let $u=u(t,x)$ denote the solution, and $y(t,\xi)$
the corresponding characteristics, thus
$y_t(t,\xi)=u(t,y(t,\xi))$. Our new variables are
$y(t,\xi)$,
\begin{equation}
  U(t,\xi)=u(t,y(t,\xi)), \quad
  H(t,\xi)=\int_{y(t,0)}^{y(t,\xi)}(u^2+u_x^2)\dx
\end{equation}
where $U$ corresponds to the Lagrangian velocity
while $H$ could be interpreted as the Lagrangian
cumulative energy distribution. In the periodic case one defines
\begin{align}
  \label{eq:Qsimp1}
  Q&=\frac1{2(e-1)}\int_0^1\sinh(y(\xi)-y(\eta))(U^2y_\xi+H_\xi)(\eta)\,d\eta \\
  &\qquad -\frac14\int_0^1\sign(\xi-\eta)\exp\big(-\sign(\xi-\eta)(y(\xi)-y(\eta))\big)(U^2y_\xi+H_\xi)(\eta)\,d\eta,\notag \\
  \label{eq:Psimp1}
  P&=\frac1{2(e-1)}\int_0^1\cosh(y(\xi)-y(\eta))(U^2y_\xi+H_\xi)(\eta)\,d\eta\\
  &\qquad+\frac14\int_0^1\exp\big(-\sign(\xi-\eta)(y(\xi)-y(\eta))\big)(U^2y_\xi+H_\xi)(\eta)\,d\eta.\notag
\end{align}
Then one can show that 
\begin{equation}
  \left\{
    \begin{aligned}
      \label{eq:sysA}
      y_t&=U,\\
      U_t&=-Q,\\
      H_t&=[U^3-2PU]_0^\xi,
    \end{aligned}
  \right.
\end{equation}
is equivalent to the Camassa--Holm equation. 
Global existence of solutions of \eqref{eq:sysA}
is obtained starting from a contraction argument,
see Theorem \ref{th:global}.  The issue of continuation of the solution past wave breaking
is  resolved by considering
the set $\D$ (see Definition \ref{def:D}) which
consists of pairs $(u,\mu)$ such that
$(u,\mu)\in\D$ if $u\in H^1_{\text{\rm per}}$ and
$\mu$ is a positive Radon measure with period one,
and whose absolutely continuous part satisfies
$\muac=(u^2+u_x^2)\,dx$. With three Lagrangian
variables $(y,U,H)$ versus two Eulerian variables
$(u,\mu)$, it is clear that there can be no
bijection between the two coordinate systems.  If
two Lagrangian variables correspond to one and the
same solution in Eulerian variables, we say that
the Lagrangian variables are relabelings of each
other.  To resolve the relabeling issue we define
a group of transformations which acts on the
Lagrangian variables and lets the system of
equations \eqref{eq:sysA} invariant. We are able
to establish a bijection between the space of
Eulerian variables and the space of Lagrangian
variables when we identify variables that are
invariant under the action of the group. This
bijection allows us to transform the results
obtained in the Lagrangian framework (in which the
equation is well-posed) into the Eulerian
framework (in which the situation is much more
subtle).  To obtain a Lipschitz metric in Eulerian
coordinates we start by constructing one in the
Lagrangian setting. To this end we start by
identifying a set $\F$ (see Definition
\ref{def:F}) that leaves the flow \eqref{eq:sysA}
invariant, that is, if $X_0\in \F$ then the
solution $X(t)$ of \eqref{eq:sysA} with $X(0)=X_0$
will remain in $\F$, i.e., $X(t)\in \F$. Next, we
identify a subgroup $\Gr$, see Definition
\ref{def:G}, of the group of homeomorphisms on the
unit interval, and we interpret $\Gr$ as the set
of relabeling functions. From this we define a
natural group action of $\Gr$ on $\F$, that is,
$\Phi(f,X)=X\act f$ for $f\in \Gr$ and $X\in\F$,
see Definition \ref{def:group} and Proposition
\ref{prop:action}.  We can then consider the
quotient space $\quot$. However, we still have to
identify a unique element in $\F$ for each
equivalence class in $\quot$. To this end we
introduce the set $\H$, see \eqref{eq:calH}, of
elements in $\F$ for which $\int_0^1 y(\xi)d\xi=0$
and $y_\xi+H_\xi=1+\norm{H_\xi}_{L^1}$.  This
establishes a bijection between $\quot$ and $\H$,
see Lemma \ref{lemma:bijection}, and therefore
between $\H$ and $\D$.  Finally, we define a
semigroup $\bar S_t(X_0)=X(t)$ on $\H$ (Definition
\ref{def:barS}), and the next task is to identify
a metric that makes the flow $\bar S_t$ Lipschitz
continuous on $\H$. We use the bijection between
$\H$ and $\D$ to transport the metric from $\H$ to
$\D$ and get a Lipschitz continuous flow on $\D$.

In \cite{HolRay:07a}, the authors define the
metric on $\H$ by simply taking the norm of the
underlying Banach space (the set $\H$ is a
nonlinear subset of a Banach space). They obtain
in this way a metric which makes the flow
continuous but not Lipschitz continuous. As we
will see (see Remark \ref{rem:compmetric}), this
metric is stronger than the one we construct here
and for which the flow is Lipschitz continuous. In
\cite{BHR}, for the Hunter--Saxton equation, the
authors use ideas from Riemannian geometry and
construct a semimetric which identifies points
that belong to the same equivalence class. The
Riemannian framework seems however too rigid for
the Camassa--Holm equation, and we have not been
able to carry out this approach. However, we
retain the essential idea which consists of
finding a semimetric which identifies equivalence
classes. Instead of a Riemannian metric, we use a
discrete counterpart. Note that this technique
will also work for the Hunter--Saxton and will
give the same metric as in \cite{BHR}. A natural
candidate for a semimetric which identifies
equivalence classes is (cf.~\eqref{eq:defJ})
\begin{equation*}
  J(X,Y)=\inf_{f,g\in\Gr}\norm{X\act f-Y\act g},
\end{equation*}
which is invariant with respect to relabeling. However, it does not satisfy the triangle inequality. Nevertheless it can be modified to satisfy all the requirements for a metric if we instead define, see Definition \ref{def:alberto}, the following quantity\footnote{This idea is due to 
  A.~Bressan (private communication).}
\begin{equation}\label{eq:alberto1}
  d(X,Y)=\inf \sum_{i=1}^NJ(X_{n-1},X_n)
\end{equation}
where the infimum is taken over all finite sequences
$\{X_n\}_{n=0}^N\in\F$ which satisfy
$X_0=X$ and $X_N=Y$.  One can then prove that $d(X,Y)$ is a metric on $\H$, see 
Lemma \ref{lemma:distance}.  Finally, we prove that the flow is Lipschitz continuous in this metric, see Theorem \ref{th:stab}.  To transfer this result to the Eulerian variables we 
reconstruct these variables from the Lagrangian coordinates as in \cite{HolRay:07a}:  
Given $X\in \F$, we define $(u,\mu)\in\D$ by (see Definition \ref{def:FtoE})  $u(x)=U(\xi)$ for 
any   $\xi$ such that $x=y(\xi)$, and $\mu =y_\# (\nu d\xi)$. We denote the mapping from $\F$ to $\D$ by $M$, and the inverse restricted to $\H$ by $L$.  The natural metric on $\D$, denoted $d_\D$, is then defined by  $d_{\D}((u,\mu),(\tilde{u},\tilde{\mu}))=d(L(u,\mu),L(\tilde{u},\tilde{\mu}))$ for two elements $(u,\mu),(\tilde{u},\tilde{\mu})$ in $\D$, see Definition \ref{def:dD}.  
The main theorem, Theorem \ref{th:main}, then states that the metric $d_\D$ is Lipschitz continuous on all states with finite energy.   In the last section, Section \ref{sec:topology}, the metric is compared with the standard norms.   Two results are proved: The mapping 
$u\mapsto  (u,(u^2+u_x^2) dx)$
is continuous from $H^1_{\rm per}$ into
$\D$ (Proposition \ref{prop:cont1}).  Furthermore, if  $(u_n, \mu_n)$ is a sequence in
$\D$ that converges to $(u,\mu)$ in
$\D$. Then $u_n\rightarrow u$ in $L^\infty_{\rm per}$  and  $\mu_n \overset{\ast}{\rightharpoonup}\mu$ (Proposition \ref{prop:cont2}).

The problem of Lipschitz continuity can nicely be illustrated in
the simpler context of ordinary differential
equations.  Consider three differential equations:
\begin{subequations}
  \begin{align}
    \dot x&= a(x), & x(0)&=x_0, \quad \text{$a$ Lipschitz},  \label{eq:reg}\\
    \dot x&= 1+\alpha H(x),&  x(0)&=x_0, \quad \text{$H$ the Heaviside function,
      $\alpha>0$},
    \label{eq:Lip}\\
    \dot x&=\abs{x}^{1/2}, & x(0)&=x_0, \quad\quad
    \text{$t\mapsto x(t)$ strictly increasing}. \label{eq:sqrtlaw}
  \end{align}
\end{subequations}
Straightforward computations give as solutions
\begin{subequations}
  \begin{align}
    x(t)&=x_0+\int_0^t a(x(s))\, ds,  \\
    x(t)&=(1+\alpha H(t-t_0))(t-t_0), \quad t_0=-x_0/(1+\alpha H(x_0)), \\
    x(t)&=\sign\big(\frac{t}2+v_0\big) \big(\frac{t}{2}+v_0\big)^2\text{ where }v_0
    =\sign(x_0)\abs{x_0}^{1/2}.
  \end{align}
\end{subequations}
We find that
\begin{subequations}
  \begin{align}
    \abs{x(t)-\bar x(t)}&\le e^{Lt}\abs{x_0-\bar x_0}, \quad L= \norm{a}_{\rm Lip},  \\
    \abs{x(t)-\bar x(t)}&\le (1+ \alpha) \abs{x_0-\bar x_0}, \\
    x(t)-\bar x(t)&= t(x_0-\bar
    x_0)^{1/2}+\abs{x_0-\bar x_0}, \quad
    \text{when $\bar x_0=0$, $t>0$, $x_0>0$.}
  \end{align}
\end{subequations}
Thus we see that in the regular case
\eqref{eq:reg} we get a Lipschitz estimate with
constant $e^{Lt}$ uniformly bounded as $t$ ranges
on a bounded interval.  In the second case
\eqref{eq:Lip} we get a Lipschitz estimate
uniformly valid for all $t\in \Real$.  In the
final example \eqref{eq:sqrtlaw}, by restricting
attention to strictly increasing solutions of the
ordinary differential equations, we achieve uniqueness and continuous
dependence on the initial data, but without any
Lipschitz estimate at all near the point $x_0=0$.  We observe that, by
introducing the Riemannian metric
\begin{equation}
  \label{eq:defriemmet}
  d(x, \bar x)= \abs{\int_x^{\bar x} \frac{dz}{{\abs{z}^{1/2}}}},
\end{equation}
an easy computation reveals that
\begin{equation}
  d(x(t), \bar x(t))= d(x_0, \bar x_0).
\end{equation}
Let us explain why this metric can be considered
as a Riemannian metric.
The Euclidean metric between the two points is
then given
\begin{equation}
  \label{eq:eucldist}
  \abs{x_0-\bar x_0}=\inf_{x}\int_0^1\abs{x_s(s)}\,ds
\end{equation}
where the infimum is taken over all paths
$x\colon[0,1]\to\Real$ that join the two points $x_0$
and $\bar x_0$, that is, $x(0)=x_0$ and
$x(1)=\bar x_0$. However, as we have seen, the
solutions are not Lipschitz for the Euclidean
metric. Thus we want to measure the infinitesimal
variation $x_s$ in an alternative way, which makes
solutions of equation \eqref{eq:sqrtlaw} Lipschitz
continuous. We look at the evolution equation that
governs $x_s$ and, by differentiating
\eqref{eq:sqrtlaw} with respect to $s$, we get
\begin{equation*}
  \dot x_s=\frac{\sign(x)x_s}{2\sqrt{\abs{x}}},
\end{equation*}
and we can check that
\begin{equation}
  \label{eq:dtriemevol}
  \frac{d}{dt}\left(\frac{\abs{x_s}}{\sqrt{\abs{x}}}\right)=0.
\end{equation}
Let us consider the real line as a Riemannian
manifold where, at any point $x\in\Real$, the
Riemannian norm is given
by $\abs{v}/\sqrt{\abs{x}}$ for any tangent vector
$v\in\Real$ in the tangent space of $x$.  From
\eqref{eq:dtriemevol}, one can see that at the
infinitesimal level, this Riemannian norm is
exactly preserved by the evolution equation. The
distance on the real line which is naturally
inherited by this Riemannian is given by
\begin{equation*}
  d(x_0,\bar x_0)=\inf_{x}\int_0^1\frac{\abs{x_s}}{\sqrt{\abs{x}}}\,ds
\end{equation*}
where the infimum is taken over all paths
$x\colon[0,1]\to\Real$ joining $x_0$ and $\bar
x_0$. It is quite reasonable to restrict ourselves
to paths that satisfy $x_s\geq0$ and then, by a
change of variables, we recover the definition
\eqref{eq:defriemmet}.  

The Riemannian approach to measure a distance
between any two distinct points in a given set (as
defined in \eqref{eq:eucldist}) requires the
existence of a smooth path between points in the
set. In the case of the Hunter--Saxton (see
\cite{BHR}), we could embed the set we were
primarily interested in into a convex set (which
is therefore connected) and which also could be
regularized (so that the Riemannian metric we
wanted to use in that case could be defined). In
the case of the Camassa--Holm equation, we have
been unable to construct such a set. However, there
exists the alternative approach which, instead of
using a smooth path to join points, uses finite
sequences of points, see \eqref{eq:alberto1}. We
illustrate this approach with equation
\eqref{eq:sqrtlaw}. We want to define a metric in
$(0,\infty)$ which makes the semigroup of
solutions Lipschitz stable. Given two points
$x,\bar{x}\in (0,\infty)$, we define the function
$J\colon (0,\infty)\times (0,\infty)\to[0,\infty)$
as
\begin{equation*}
  J(x,\bar{x})=
  \begin{cases}
    \frac{x-\bar{x}}{\bar{x}^{1/2}}&\text{
      if $x\geq \bar{x}$,}\\[2mm]
    \frac{\bar{x}-x}{x^{1/2}}&\text{
      if $x<\bar{x}$}.
  \end{cases}
\end{equation*}
The function $J$ is symmetric and
$J(x,\bar{x})=0$ if and only if
$x=\bar{x}$, but $J$ does not satisfy the
triangle inequality. Therefore we define (cf.~\eqref{eq:alberto1})
\begin{equation} \label{eq:alberto}
  d(x,\bar{x})=\inf \sum_{n=0}^NJ(x_n,x_{n+1})
\end{equation}
where the infimum is taken over all finite sequences
$\{x_n\}_{n=0}^N$ such that $x_0=x$ and
$x_N=\bar{x}$. Then, $d$ satisfies the triangle
inequality and one can prove that it is also a
metric. Given $x_n,x_{n+1}\in E$ such that
$x_n\leq x_{n+1}$, we denote $x_n(t)$ and
$x_{n+1}(t)$ the solution of \eqref{eq:sqrtlaw}
with initial data $x_n$ and $x_{n+1}$,
respectively. After a short computation, we get
\begin{equation*}
  \frac{d}{dt}J(x_n(t),x_{n+1}(t))=-\frac1{2x_n}(x_n+x_{n+1}-2\sqrt{x_nx_{n+1}})\leq0.
\end{equation*}
Hence, $J(x_n(t),x_{n+1}(t))\leq J(x_n,x_{n+1})$
so that
\begin{equation*}
  d(x(t),\bar{x}(t))\leq d(x,\bar{x})
\end{equation*}
and the semigroup of solutions to
\eqref{eq:sqrtlaw} is a contraction for the metric
$d$. It follows from the definition of $J$ that,
for $x_1,x_2,x_3\in E$ with $x_1<x_2<x_3$, we have
\begin{equation}
  \label{eq:stricineqj}
  J(x_1,x_2)+J(x_2,x_3)<J(x_1,x_3).
\end{equation}
It implies that $d(x,\bar x)$ satisfies
\begin{equation*}
  d(x,\bar x)=\inf_{\delta} \sum_{n=0}^NJ(x_n,x_{n+1})
\end{equation*}
where $\delta=\min_{n}\abs{x_{n+1}-x_n}$, which is
also the definition of the Riemann integral, so
that
\begin{equation*}
  d(x,\bar x)=\int_{x}^{\bar x}\frac{1}{\sqrt{z}}\,dz
\end{equation*}
and the metric we have just defined coincides with
the Riemannian metric we have introduced. Note
that if we choose
\begin{equation*}
  \bar J(x,\bar{x})=
  \begin{cases}
    \frac{x-\bar{x}}{x^{1/2}}&\text{
      if $x\geq\bar{x}$}\\
    \frac{\bar{x}-x}{\bar{x}^{1/2}}&\text{ if
      $x<\bar{x}$},
  \end{cases}
\end{equation*}
then \eqref{eq:stricineqj} does not hold; we have
instead $\bar J(x_1,x_3)<\bar J(x_1,x_2)+\bar J(x_2,x_3)$, which is the triangle inequality. Thus,
for $\bar d$ as defined by \eqref{eq:alberto} with $J$ replaced by $\bar J$, we get
\begin{equation*}
  \bar d(x,\bar x)=\bar J(x,\bar x)\neq\int_{x}^{\bar x}\frac{1}{\sqrt{z}}\,dz.
\end{equation*}
It is also possible to check that, for $\bar J$,
we cannot get that $\bar
J(x_n(t),x_{n+1}(t))\leq C \bar J(x_n,x_{n+1})$ for any constant $C$ 
for any $x_n$ and $x_{n+1}$ and $t\in[0,T]$ (for a
given $T$), so that the definition of $\bar J$ is
inappropriate to obtain results of stability for
\eqref{eq:sqrtlaw}.

\section{Semi-group of solutions in Lagrangian
  coordinates}

The Camassa--Holm equation for $\kappa=0$
reads
\begin{equation}
  u_t-u_{xxt}+3uu_x-2u_xu_{xx}-uu_{xxx}=0,
\end{equation}
and can be rewritten as the following 
system\footnote{For $\kappa$ nonzero, equation
\eqref{eq:Pa} is simply replaced by
$P-P_{xx}=\kappa u+u^2+\frac{1}{2}u_x^2$.}
\begin{align}
  u_t+uu_x+P_x&=0, \label{eq:Pa} \\
  P-P_{xx}&=u^2+\frac{1}{2}u_x^2.
\end{align}

We consider periodic solutions of period one.
Next, we rewrite the equation in Lagrangian
coordinates. Therefore we introduce the
characteristics
\begin{equation}
  y_t(t,\xi)=u(t,y(t,\xi)).
\end{equation}
We introduce the space $V_1$ defined as
\begin{equation*}
  V_1=\{f\in \Wlocun(\Real) \mid  f(\xi+1)=f(\xi)+1\, \text{for all }\xi\in\Real\}.
\end{equation*}
Functions in $V_1$ map the unit interval into
itself in the sense that if $u$ is periodic with
period 1, then $u\circ f$ is also periodic with
period 1. The Lagrangian velocity $U$ reads
\begin{equation}
  U(t,\xi)=u(t,y(t,\xi)).
\end{equation}
We will consider $y\in V_1$ and $U$ periodic.  We
define the Lagrangian energy cumulative
distribution as
\begin{equation}
  \label{eq:Hdef}
  H(t,\xi)=\int_{y(t,0)}^{y(t,\xi)}(u^2+u_x^2)(t,x)\,dx.
\end{equation}
For all $t$, the function $H$
belongs to the vector space $V$ defined as
follows:
\begin{multline*}
  V=\{f\in \Wlocun(\Real)\mid  \text{there exists $\alpha\in\Real$}\\ 
  \text{such that $f(\xi+1)=f(\xi)+\alpha$ for all $\xi\in\Real$}\}.
\end{multline*}
Equip $V$ with the norm
\begin{equation*}
  \norm{f}_{V}=\norm{f}_{L^\infty([0,1])}+\norm{f_\xi}_{L^1([0,1])}. 
\end{equation*}
As an immediate consequence of the definition of
the characteristics we obtain
\begin{equation}
  U_t(t,\xi)=u_t(t,y)+y_t(t,\xi)u_x(t,y)=-P_x\circ y(t,\xi).
\end{equation}
This last term can be expressed uniquely in term
of $U$, $y$, and $H$. We have the following
explicit expression for $P$,
\begin{equation}
  \label{eq:peq}
  P(t,x)=\frac{1}{2}\int_\Real
  e^{-\abs{x-z}}(u^2(t,z)+\frac{1}{2}u_x^2(t,z))\,dz.
\end{equation} 
Thus,
\begin{equation*}
  P_x\circ{y}(t,\xi)=-\frac{1}{2}\int_\Real\sign(y(t,\xi)-z) e^{-\abs{y(t,\xi)-z}}(u^2(t,z)+\frac{1}{2}u_x^2(t,z))\,dz,
\end{equation*}
and, after the change of variables $z=y(t,\eta)$,
\begin{multline}
  \label{eq:pxy0}
  P_x\circ y(t,\xi)=-\frac{1}{2}\int_\Real\Big[\sign(y(t,\xi)-y(t,\eta)) e^{-\abs{y(t,\xi)-y(t,\eta)}}\\
  \times\left(u^2(t,y(t,\eta))+\frac{1}{2}u_x^2(t,y(t,\eta))\right)y_\xi(t,\eta)\Big]\,d\eta.
\end{multline}
We have
\begin{equation}
  \label{eq:Hxi}
  H_\xi=(u^2+u_x^2)\circ{y}y_\xi=: \nu.
\end{equation}
Note that $\nu$ is
periodic with period one. Then, \eqref{eq:pxy0}
can be rewritten as
\begin{equation}
  \label{eq:pxy1}
  P_x\circ{y}(\xi)=-\frac{1}{4}\int_\Real\sign(y(\xi)-y(\eta))\exp(-\abs{y(\xi)-y(\eta)})\big(U^2y_\xi+\nu\big)(\eta)\,d\eta,
\end{equation}
where the $t$ variable has been dropped to
simplify the notation. Later we will prove that
$y$ is an increasing function for any fixed time
$t$. If, for the moment, we take this for granted,
then $P_x\circ{y}$ is equivalent to $Q$ where
\begin{equation}
  \label{eq:Q}
  Q(t,\xi)=-\frac{1}{4}\int_\Real\sign(\xi-\eta)\exp\big(-\sign(\xi-\eta)(y(\xi)-y(\eta))\big)\big(U^2y_\xi+\nu\big)(\eta)\,d\eta,
\end{equation}
and, slightly abusing the notation, we write
\begin{equation}
  \label{eq:P}
  P(t,\xi)=\frac{1}{4}\int_\Real\exp\big(-\sign(\xi-\eta)(y(\xi)-y(\eta))\big)\big(U^2y_\xi+\nu\big)(\eta)\,d\eta.
\end{equation}
The derivatives of $Q$ and $P$ are given by
\begin{equation}
  \label{eq:QPder}
  Q_\xi=-\frac{1}{2}\nu-\left(\frac{1}{2}U^2-P\right)y_\xi\ \text{ and }\ P_\xi=Qy_\xi,
\end{equation}
respectively. 
For $\xi\in[0,1]$, using the fact that
$y(\xi+1)=y(\xi)+1$ and the periodicity of $\nu$
and $U$, the expressions  for $Q$ and $P$ can be rewritten as
\begin{multline}
  \label{eq:Qsimp}
  Q=\frac1{2(e-1)}\int_0^1\sinh(y(\xi)-y(\eta))(U^2y_\xi+\nu)(\eta)\,d\eta\\-\frac14\int_0^1\sign(\xi-\eta)\exp\big(-\sign(\xi-\eta)(y(\xi)-y(\eta))\big)(U^2y_\xi+\nu)(\eta)\,d\eta,
\end{multline}
and
\begin{multline}
  \label{eq:Psimp}
  P=\frac1{2(e-1)}\int_0^1\cosh(y(\xi)-y(\eta))(U^2y_\xi+\nu)(\eta)\,d\eta\\+\frac14\int_0^1\exp\big(-\sign(\xi-\eta)(y(\xi)-y(\eta))\big)(U^2y_\xi+\nu)(\eta)\,d\eta.
\end{multline}
Thus $P_x\circ y$ and $P\circ y$ can be replaced
by equivalent expressions given by \eqref{eq:Q}
and \eqref{eq:P} which only depend on our new
variables $U$, $H$, and $y$. We obtain a new
system of equations, which is at least formally
equivalent to the Camassa--Holm equation:
\begin{equation}
  \label{eq:equivsys}
  \left\{
    \begin{aligned}  
      y_t &= U,\\
      U_t &= -Q,\\
      H_t &= [U^3-2PU]_0^\xi.
    \end{aligned}
  \right.
\end{equation}
After differentiating  \eqref{eq:equivsys} we find
\begin{equation}
  \label{eq:equivsysdev}
  \left\{
    \begin{aligned}
      y_{\xi t}& =U_\xi,\\
      U_{\xi t}&=\frac{1}{2}\nu+\left(\frac{1}{2}U^2-P\right)y_\xi,\\
      H_{\xi
        t}&=-2Q\,Uy_\xi+\left(3U^2-2P\right)U_\xi.
    \end{aligned}
  \right.
\end{equation}
From \eqref{eq:equivsys} and
\eqref{eq:equivsysdev}, we obtain the system
\begin{equation}
  \label{eq:sys}
  \left\{
    \begin{aligned}  
      y_t &= U,\\
      U_t &= -Q,\\
      \nu_t&=-2Q\,Uy_\xi+\left(3U^2-2P\right)U_\xi.
    \end{aligned}
  \right.
\end{equation}
We can write   \eqref{eq:sys} more compactly as
\begin{equation}
  \label{eq:condequivsys}
  X_t=F(X), \quad X=(y,U,\nu).
\end{equation}
Let
\begin{equation*}
  \Wperun=\{f\in \Wlocun(\Real) \mid  f(\xi+1)=f(\xi)\text{ for all }\xi\in\Real\}.
\end{equation*}
We equip $\Wperun$ with the norm of $V$, that is,
\begin{equation*}
  \norm{f}_{\Wperun}=\norm{f}_{L^\infty([0,1])}+\norm{f_\xi}_{L^1([0,1])},
\end{equation*}
which is equivalent to the standard norm of
$\Wperun$ because
$\norm{f}_{L^1([0,1])}\leq\norm{f}_{L^\infty([0,1])}\leq\norm{f}_{L^1([0,1])}+\norm{f_\xi}_{L^1([0,1])}$.
Let
$E$ be the Banach space defined as
\begin{equation*}
  E=V\times \Wperun\times L^1_{\rm per}.
\end{equation*}
We derive the following Lipschitz estimates for
$P$ and $Q$.
\begin{lemma}
  \label{lem:PQ} 
  For any $X=(y,U,\nu)$ in $E$, we define the maps
  $\Q$ and $\PP$ as $\Q(X)=Q$ and $\PP(X)=P$ where
  $Q$ and $P$ are given by \eqref{eq:Q} and
  \eqref{eq:P}, respectively. Then, $\PP$ and $\Q$
  are Lipschitz maps on bounded sets from $E$ to
  $\Wperun$. More precisely, we have the following
  bounds. Let 
  \begin{equation}
    \label{eq:defBM}
    B_M=\{X=(y,U,\nu)\in E\mid 
    \norm{U}_{\Wperun}+\norm{y_\xi}_{L^1}+\norm{\nu}_{L^1}\leq
    M\}.
  \end{equation}
  Then for any $X,\tilde X\in B_M$, we have
  \begin{equation}
    \label{eq:lipbdQ}
    \norm{\Q(X)-\Q(\tilde X)}_{\Wperun}\leq C_M\norm{X-\tilde X}_{E}
  \end{equation}
  and
  \begin{equation}
    \label{eq:lipbdP}
    \norm{\PP(X)-\PP(\tilde X)}_{\Wperun}\leq C_M\norm{X-\tilde X}_{E}
  \end{equation}
  where the constant $C_M$ only depends on the value
  of $M$.
\end{lemma}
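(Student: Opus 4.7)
The plan is to first obtain $L^\infty$-Lipschitz estimates for $\Q$ and $\PP$ from the representations \eqref{eq:Qsimp} and \eqref{eq:Psimp}, and then reduce the estimate on the $\xi$-derivatives to this case via the pointwise identities \eqref{eq:QPder}.

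The key observation is that on $B_M$ the integrands and kernels are all controlled even though $\|y\|_{L^\infty}$ itself is not constrained. Indeed, for $\xi,\eta\in[0,1]$,
\begin{equation*}
|y(\xi)-y(\eta)|\le\|y_\xi\|_{L^1([0,1])}\le M,
\end{equation*}
so the kernels $\sinh(y(\xi)-y(\eta))$, $\cosh(y(\xi)-y(\eta))$, and $\exp(-\sign(\xi-\eta)(y(\xi)-y(\eta)))$ are uniformly bounded by a constant depending only on $M$; moreover, each of these smooth functions is Lipschitz on $[-M,M]$ with a constant depending only on $M$. Hence, for $X,\tilde X\in B_M$, the difference of any of the three kernels at $X$ versus $\tilde X$ is pointwise bounded by $C_M\|y-\tilde y\|_{L^\infty([0,1])}\le C_M\|X-\tilde X\|_E$. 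Similarly, the decomposition
\begin{equation*}
U^2y_\xi-\tilde U^2\tilde y_\xi=(U+\tilde U)(U-\tilde U)y_\xi+\tilde U^2(y_\xi-\tilde y_\xi)
\end{equation*}
together with $\|U\|_{L^\infty},\|\tilde U\|_{L^\infty}\le M$ gives $\|U^2y_\xi-\tilde U^2\tilde y_\xi\|_{L^1}\le C_M\|X-\tilde X\|_E$, and trivially $\|\nu-\tilde\nu\|_{L^1}\le\|X-\tilde X\|_E$.

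The $L^\infty$-Lipschitz estimate on $\Q(X)-\Q(\tilde X)$ then follows by the standard add-and-subtract trick: split into one integral where the kernel difference is paired against $(U^2y_\xi+\nu)$ (whose $L^1$-mass is bounded by $M^3+M$) and a second integral where the kernel evaluated at $\tilde X$ (uniformly bounded) is paired against $(U^2y_\xi+\nu)-(\tilde U^2\tilde y_\xi+\tilde\nu)$. The same argument handles $\PP$, and, taking $\tilde X$ to a fixed reference point, also yields the uniform bounds $\|\Q(X)\|_{L^\infty},\|\PP(X)\|_{L^\infty}\le C_M$ needed for the next step.

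For the derivative part of the $\Wperun$-norm, one invokes \eqref{eq:QPder}, namely $Q_\xi=-\tfrac12\nu-(\tfrac12U^2-P)y_\xi$ and $P_\xi=Qy_\xi$. These expressions are bounded in $L^1([0,1])$ by $C_M$ using the $L^\infty$-bounds on $U,P,Q$ together with the $L^1$-bounds on $\nu$ and $y_\xi$. Expanding $Q_\xi-\tilde Q_\xi$ and $P_\xi-\tilde P_\xi$ into sums of products, each of which has one factor controlled in $L^\infty$ and another in $L^1$, and combining the $L^\infty$-Lipschitz estimates for $\PP$ and $\Q$ already proved with the trivial estimates on the remaining factors, yields the required $L^1$-Lipschitz bound on the derivatives. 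The main obstacle is not any subtle analysis but the bookkeeping: one has to keep track of which factor is estimated in which norm at each stage, and to exploit the observation that on $B_M$ the potentially dangerous quantity $y(\xi)-y(\eta)$ is controlled solely by $\|y_\xi\|_{L^1}$.
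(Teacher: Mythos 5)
Your proposal is correct and follows essentially the same route as the paper: first the $L^\infty$-Lipschitz estimate for $\Q$ and $\PP$ from the periodic representations \eqref{eq:Qsimp}--\eqref{eq:Psimp}, using $\abs{y(\xi)-y(\eta)}\leq\norm{y_\xi}_{L^1}\leq M$ to control the kernels, and then the reduction of the derivative estimates to this via \eqref{eq:QPder} with the usual $L^\infty$--$L^1$ splitting of the products. Your explicit remark that one also needs the uniform bounds $\norm{\Q(X)}_{L^\infty},\norm{\PP(X)}_{L^\infty}\leq C_M$ is a correct (and slightly more explicit) rendering of what the paper uses implicitly; only the brief observation that $Q$ and $P$ are periodic (so that they indeed lie in $\Wperun$) is left unstated, which is a minor omission.
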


\begin{proof} 
  Let us first prove that $\PP$ and $\Q$ are
  Lipschitz maps from $B_M$ to $\Linfper$. Note that by using a change of variables in 
  \eqref{eq:Qsimp} and \eqref{eq:Psimp}, we obtain that $\PP$ and $\Q$ are periodic with period $1$.  Let now
  $X=(y,U,\nu)$ and $\tilde X=(\tilde y,\tilde
  U,\tilde \nu)$ be two elements of $B_M$.  Since
  the map $x\mapsto\cosh x$ is locally Lipschitz,
  it is Lipschitz on  $[-M,M]$. 
  We denote by $C_M$ a generic constant that
  only depends on $M$. Since, for all $\xi,\eta$
  in $[0,1]$ we have
  $\abs{y(\xi)-y(\eta)}\leq\norm{y_\xi}_{L^1}$, we
  also have
  \begin{align*}
    \abs{\cosh(y(\xi)-y(\eta))-\cosh(\tilde
      y(\xi)-\tilde y(\eta))}&\leq
    C_M\abs{y(\xi)-\tilde y(\xi)-y(\eta)+\tilde
      y(\eta)}\\ &\leq C_M\norm{y-\tilde y}_\Linf.
  \end{align*}
  It follows that, for all $\xi\in[0,1]$,
  \begin{multline*}
    \norm{\cosh(y(\xi)-y(\dott))(U^2y_\xi+\nu)(\dott)-\cosh(\tilde
      y(\xi)-\tilde y(\dott))(\tilde U^2\tilde
      y_\xi+\tilde\nu)(\dott)}_{L^1}\\
    \leq C_M\big(\norm{y-\tilde
      y}_\Linf+\norm{U-\tilde
      U}_\Linf+\norm{y_\xi-\tilde
      y_\xi}_{L^1}+\norm{\nu-\tilde \nu}_{L^1}\big)
  \end{multline*}
  and the map
  $X=(y,U,\nu)\mapsto\frac1{2(e-1)}\int_0^1\cosh(y(\xi)-y(\eta))(U^2y_\xi+\nu)(\eta)\,d\eta$
  which corresponds to the first term in
  \eqref{eq:Psimp} is Lipschitz from $B_M$ to
  $\Linfper$ and the Lipschitz constant only
  depends on $M$. We handle the other terms in
  \eqref{eq:Psimp} in the same way and we prove
  that $\PP$ is Lipschitz from $B_M$ to
  $\Linfper$. Similarly, one proves that $\Q\colon
  B_M\to\Linfper$ is Lipschitz for a Lipschitz
  constant which only depends on $M$. Direct
  differentiation gives the expressions
  \eqref{eq:QPder} for the derivatives $P_\xi$ and
  $Q_\xi$ of $P$ and $Q$. Then, as $\PP$ and $\Q$
  are Lipschitz from $B_M$ to $\Linfper$, we have
  \begin{align*}
    \big\|&\Q(X)_\xi-\Q(\tilde X)_\xi\big\|_{L^1}\\
    &=\norm{y_\xi\PP(X)-\tilde y_\xi\PP(\tilde
      X)-\frac12(U^2y_\xi-\tilde U^2\tilde
      y_\xi)-\nu+\tilde \nu}_{L^1}\\
    &\leq C_M\big(\norm{\PP(X)-\PP(\tilde
      X)}_\Linf+\norm{U-\tilde
      U}_\Linf+\norm{y_\xi-\tilde
      y_\xi}_{L^1}+\norm{\nu-\tilde
      \nu}_{L^1}\big)\\
    &\leq C_M\norm{X-\tilde X}_E.
  \end{align*}
  Hence, we have proved that $\Q\colon
  B_M\to\Wperun$ is Lipschitz for a Lipschitz
  constant that only depends on $M$. We prove the
  corresponding result for $\PP$ in the same way.
\end{proof}

The short-time existence follows from Lemma
\ref{lem:PQ} and a contraction argument. Global
existence is obtained only for initial data which
belong to the set $\F$ as defined below.

\begin{definition}
  \label{def:F}
  The set $\F$ is composed of all $(y,U,\nu)\in E$
  such that
  \begin{subequations}
    \label{eq:lagcoord}
    \begin{align}
      \label{eq:lagcoord1}
      &y\in V_1,\ (y,U)\in
      W^{1,\infty}_{\rm loc}(\Real)\times W^{1,\infty}_{\rm loc}(\Real) ,\ \nu\in L^\infty,\\
      \label{eq:lagcoord2}
      &y_\xi\geq0,\ \nu\geq0,\ y_\xi+\nu\geq c\text{
        almost everywhere, for some constant $c>0$},\\
      \label{eq:lagcoord3}
      &y_\xi \nu=y_\xi^2U^2+U_\xi^2\text{ almost everywhere}.
    \end{align}
  \end{subequations}
\end{definition}

\begin{lemma}
  The set $\F$ is preserved by the equation
  \eqref{eq:sys}, that is, if 
  $X(t)$ solves \eqref{eq:sys} for $t\in[0,T]$
  with initial data $X_0\in\F$, then $X(t)\in\F$ for
  all $t\in[0,T]$.
\end{lemma}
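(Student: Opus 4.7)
The plan is to verify the four defining conditions of $\F$ along a solution $X(t)=(y,U,\nu)$ of \eqref{eq:sys}, working from the closed linear system \eqref{eq:equivsysdev} satisfied by the derivatives $(y_\xi,U_\xi,\nu)$. The periodicity $y\in V_1$ is immediate: since $y_t=U$ is periodic of period one, $y(t,\xi+1)-y(t,\xi)$ is constant in $t$ and hence equals its initial value $1$. The regularity $(y,U)\in W^{1,\infty}_{\rm loc}\times W^{1,\infty}_{\rm loc}$ and $\nu\in L^\infty$ follows from Gronwall estimates applied to \eqref{eq:equivsysdev}, using the uniform $\Linf$ bounds on $P$ and $Q$ provided by Lemma \ref{lem:PQ}.

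The algebraic identity \eqref{eq:lagcoord3} is preserved by a direct time differentiation of $y_\xi\nu-y_\xi^2U^2-U_\xi^2$ using \eqref{eq:sys} and \eqref{eq:equivsysdev}: the terms $U_\xi\nu$, $-2QUy_\xi^2$ and $(3U^2-2P)y_\xi U_\xi$ appear on both sides with matching signs and cancel, so the quantity is conserved in $t$ and remains zero.

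The core step is the sign condition \eqref{eq:lagcoord2}, which I would handle by a continuity/bootstrap argument. Wherever $y_\xi\ge 0$ and $\nu\ge 0$, the preserved identity yields $U_\xi^2\le y_\xi\nu$ and hence $|U_\xi|\le\frac12(y_\xi+\nu)$ by AM--GM. Inserting this into the computation
\begin{equation*}
(y_\xi+\nu)_t=U_\xi(1+3U^2-2P)-2QUy_\xi
\end{equation*}
gives $|(y_\xi+\nu)_t|\le C(y_\xi+\nu)$ with $C$ depending only on the uniform $\Linf$ bounds on $U$, $P$, $Q$, so Gronwall delivers $y_\xi+\nu\ge c\,\E^{-Ct}>0$. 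Let $T^*$ denote the supremum of those $t\in[0,T]$ for which $y_\xi,\nu\ge 0$ everywhere on $[0,t]\times[0,1]$; by continuity $T^*>0$ and the Gronwall lower bound extends up to $T^*$. If $T^*<T$, there exists $\xi^*\in[0,1]$ at which $y_\xi(T^*,\xi^*)=0$ or $\nu(T^*,\xi^*)=0$. In the first case the identity forces $U_\xi(T^*,\xi^*)=0$ while the Gronwall bound forces $\nu(T^*,\xi^*)\ge c\,\E^{-CT^*}>0$, so \eqref{eq:equivsysdev} gives $y_{\xi t}(T^*,\xi^*)=0$ and $U_{\xi t}(T^*,\xi^*)=\frac12\nu(T^*,\xi^*)>0$, yielding the Taylor expansion
\begin{equation*}
y_\xi(T^*+s,\xi^*)=\frac{\nu(T^*,\xi^*)}{4}s^2+O(s^3),
\end{equation*}
which is positive for small $s>0$; a uniform-in-$\xi$ version of the same estimate in a neighborhood of $\xi^*$ (using continuity of $\nu$ and the Gronwall bound) rules out $y_\xi$ becoming negative. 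The case $\nu(T^*,\xi^*)=0$ is analogous: the identity forces $U_\xi(T^*,\xi^*)=0$ and $U(T^*,\xi^*)y_\xi(T^*,\xi^*)=0$, and a short computation from \eqref{eq:equivsysdev} and \eqref{eq:sys} gives $\nu_{tt}(T^*,\xi^*)=2(P^2+Q^2)(T^*,\xi^*)y_\xi(T^*,\xi^*)\ge 0$, so the nonnegative function $\nu(\cdot,\xi^*)$ cannot cross into negative values. Either case contradicts $T^*<T$, whence $T^*=T$ and \eqref{eq:lagcoord2} holds throughout $[0,T]$.

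The main obstacle is precisely this sign-preservation step. Because the identity \eqref{eq:lagcoord3} ties $U_\xi$ to $y_\xi$, both must vanish simultaneously at a collision, and the linear ODE \eqref{eq:equivsysdev} for $(y_\xi,U_\xi)$ is consequently degenerate there, so uniqueness of the linear system alone cannot exclude $y_\xi$ changing sign. The key mechanism that rescues us is the strict positivity of $\nu$ guaranteed by the Gronwall bound on $y_\xi+\nu$: it acts as a strictly positive forcing in $U_{\xi t}=\tfrac12\nu+(\tfrac12U^2-P)y_\xi$ at any candidate vanishing point, and thereby forces $y_\xi$ to vanish only to second order in $t-T^*$ and bounce back into the positive half-line.
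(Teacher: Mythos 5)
Your overall strategy---preservation of the identity \eqref{eq:lagcoord3} by direct time differentiation, a Gronwall lower bound on $y_\xi+\nu$ using $|U_\xi|\le\tfrac12(y_\xi+\nu)$, and the observation that $y_\xi$ can only vanish to second order with $y_{\xi tt}=\tfrac12\nu>0$---is exactly the mechanism of the proof the paper relies on (the paper gives no details and defers to \cite{HolRay:07a}), and your computations, including $(y_\xi+\nu)_t=U_\xi(1+3U^2-2P)-2QUy_\xi$ and $\nu_{tt}=2(P^2+Q^2)y_\xi$ at a doubly degenerate zero of $\nu$, are correct; the Gronwall bound also delivers the uniform positive lower bound on $y_\xi+\nu$ required in \eqref{eq:lagcoord2}.

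Two steps, however, do not hold up as written. First, the bootstrap in $(t,\xi)$: you define $T^*$ by requiring $y_\xi,\nu\ge0$ \emph{everywhere} on $[0,t]\times[0,1]$, extract a point $\xi^*$ with $y_\xi(T^*,\xi^*)=0$ or $\nu(T^*,\xi^*)=0$, and then invoke a ``uniform-in-$\xi$ version of the same estimate in a neighborhood of $\xi^*$, using continuity of $\nu$''. Elements of $\F$ only have $y_\xi,\nu\in L^\infty$; these are defined almost everywhere and are in general not continuous in $\xi$, so neither the existence of such a $\xi^*$ nor any continuity or uniformity in $\xi$ is available. The remedy---and this is how the cited proof proceeds---is to fix $\xi$ in a set of full measure on which the initial sign conditions and \eqref{eq:lagcoord3} hold and on which $t\mapsto(y_\xi,U_\xi,\nu)(t,\xi)$ solves \eqref{eq:equivsysdev} classically, and to run your first-failure-time argument purely in $t$ for that fixed $\xi$; your Gronwall estimate and Taylor expansion are already pointwise in $\xi$, so no uniformity is needed. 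Second, in the case $\nu(T^*,\xi^*)=0$ you infer from $\nu_t=0$ and $\nu_{tt}\ge0$ that $\nu$ cannot become negative; this is not a valid inference when $\nu_{tt}=0$ (consider $-(t-T^*)^3$). It can be repaired by noting that $\nu_{tt}=2(P^2+Q^2)y_\xi>0$ unless $P=Q=0$ there, which forces $h=0$ and the trivial solution; but the cleaner route is to drop this case entirely: \eqref{eq:lagcoord3} gives $y_\xi\nu=y_\xi^2U^2+U_\xi^2\ge0$ for all times, so once $y_\xi\ge0$ and $y_\xi+\nu>0$ are established, $\nu\ge0$ follows at once ($\nu=(y_\xi^2U^2+U_\xi^2)/y_\xi\ge0$ where $y_\xi>0$, and $\nu>0$ where $y_\xi=0$). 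With these repairs your argument coincides with the proof in \cite{HolRay:07a}.
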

The proof is basically the same as in
\cite{HolRay:07a}.
\begin{theorem}
  \label{th:global}
  For any $\bar X=(\bar y,\bar U,\bar \nu)\in\F$,
  the system \eqref{eq:sys} admits a unique global
  solution $X(t)=(y(t),U(t),\nu(t))$ in
  $C^1(\Real_+,E)$ with initial data $\bar X=(\bar
  y,\bar U,\bar\nu)$. We have $X(t)\in\F$ for all
  times. Let the mapping
  $S\colon\F\times\Real_+\to\F$ be defined as
  \begin{equation*}
    S_t(X)=X(t).
  \end{equation*}
  Given $M>0$ and $T>0$, we define $B_M$ as before,
  that is,
  \begin{equation}
    \label{eq:defBM2}  
    B_M=\{X=(y,U,\nu)\in E\mid 
    \norm{U}_{\Wperun}+\norm{y_\xi}_{L^1}+\norm{\nu}_{L^1}\leq
    M\}.
  \end{equation}
  Then there exists a constant $C_M$ which depends
  only on $M$ and $T$ such that, for any two
  elements $X_\alpha$ and $X_\beta$ in $B_M$, we
  have
  \begin{equation}
    \label{eq:stabSt}
    \norm{S_tX_\alpha-S_tX_\beta}_E\leq C_M\norm{X_\alpha-X_\beta}_E
  \end{equation}
  for any $t\in[0,T]$.
\end{theorem}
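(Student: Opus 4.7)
The plan is to prove the theorem in three stages: short-time existence by a contraction argument in the Banach space $E$, global continuation using a priori bounds derived from the invariance of $\F$ and a conserved energy, and Lipschitz stability via a Gronwall argument. I first rewrite \eqref{eq:sys} as the abstract ODE $X_t=F(X)$ with $F(X)=(U,-\Q(X),-2\Q(X)Uy_\xi+(3U^2-2\PP(X))U_\xi)$, and argue that $F\colon E\to E$ is Lipschitz on each ball $B_M$. The first two components inherit this from Lemma \ref{lem:PQ}; for the third I decompose into bilinear pairings of $\Linfper$ factors (namely $\Q(X)$, $\PP(X)$, $U$) against $L^1_{\rm per}$ factors ($y_\xi$, $U_\xi$) and use that bilinear maps $\Linfper\times L^1_{\rm per}\to L^1_{\rm per}$ are Lipschitz on bounded sets. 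The Picard--Lindel\"of theorem then yields a unique local solution $X\in C^1([0,T^*),E)$ on a maximal interval, and $X(t)\in\F$ throughout by the preceding lemma.

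To show $T^*=\infty$ it suffices to bound $\norm{X(t)}_E$ uniformly on bounded time intervals. Integrating the third equation in \eqref{eq:sys} over $[0,1]$ and using the identities \eqref{eq:QPder}, periodicity, and integration by parts, one obtains $\frac{d}{dt}\int_0^1\nu\,d\xi=0$, so the Lagrangian energy is conserved. Combined with $\int_0^1 y_\xi\,d\xi=1$ (from $y\in V_1$), this controls $\norm{y_\xi}_{L^1}$ and $\norm{\nu}_{L^1}$. The constraint \eqref{eq:lagcoord3} gives $U_\xi^2\leq y_\xi\nu$, so Cauchy--Schwarz bounds $\norm{U_\xi}_{L^1}$ in terms of $(\int_0^1\nu)^{1/2}$. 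For $\norm{U}_{\Linf}$, one reads $y_\xi U^2\leq\nu$ through the change of variables $x=y(\xi)$: this identifies $\int_0^1 \nu\,d\xi$ with the Eulerian energy $\int_0^1(u^2+u_x^2)\,dx$, and Sobolev embedding on the unit circle controls $\norm{u}_{\Linfper}$, whence $\norm{U}_{\Linf}\leq\norm{u}_{\Linfper}$. Together, these a priori bounds show $X(t)\in B_{M'}$ for some $M'=M'(M,T)$ whenever $X_0\in B_M$ and $t\in[0,T]$, precluding finite-time blow-up.

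Finally, for \eqref{eq:stabSt}, let $X_\alpha(t)$ and $X_\beta(t)$ be the solutions with initial data $X_\alpha,X_\beta\in B_M$. By the bounds of the previous step both trajectories remain in $B_{M'}$ on $[0,T]$. Subtracting the evolution equations yields
\[
\frac{d}{dt}(X_\alpha(t)-X_\beta(t))=F(X_\alpha(t))-F(X_\beta(t)),
\]
and the Lipschitz continuity of $F$ on $B_{M'}$ (same estimate as in the first step) gives $\frac{d}{dt}\norm{X_\alpha(t)-X_\beta(t)}_E\leq C_{M'}\norm{X_\alpha(t)-X_\beta(t)}_E$, so Gronwall's inequality implies $\norm{X_\alpha(t)-X_\beta(t)}_E\leq e^{C_{M'}T}\norm{X_\alpha-X_\beta}_E$, as claimed. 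The principal obstacle I anticipate is making the $\Linf$ bound on $U$ rigorous: passing to Eulerian variables requires $y$ to be a genuine homeomorphism rather than merely nondecreasing, and a careful justification, possibly by an approximation argument as in \cite{HolRay:07a}, is needed.
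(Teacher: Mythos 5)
Your skeleton coincides with the paper's proof: short-time existence by contraction using Lemma \ref{lem:PQ}, invariance of $\F$, conservation of $\int_0^1\nu\,d\xi$ as in \eqref{eq:presen}, and a Gronwall comparison of two trajectories for \eqref{eq:stabSt}. Your direct bounds $\norm{y_\xi}_{L^1}=1$ (from $y_\xi\geq 0$, $y(1)-y(0)=1$) and $\norm{U_\xi}_{L^1}\leq\norm{y_\xi}_{L^1}^{1/2}\norm{\nu}_{L^1}^{1/2}$ from \eqref{eq:lagcoord3} are a legitimate shortcut replacing the paper's Gronwall bound on $Z(t)=\norm{y_\xi}_{L^1}+\norm{U_\xi}_{L^1}+\norm{\nu}_{L^1}$. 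The genuine gap is exactly the step you flag: the $\Linf$ bound on $U$ obtained by passing to Eulerian variables. At this stage nothing about the Eulerian reconstruction is available: $y(t,\cdot)$ is only nondecreasing and can be constant on sets of positive measure (these are precisely the wave-breaking configurations that $\F$ is designed to include), so you would first have to show that $u(x)=U(\xi)$ for $x=y(\xi)$ is well defined, that $u\in H^1_{\rm per}$, and that $\int_0^1(u^2+u_x^2)\,dx\leq\int_0^1\nu\,d\xi$; in the paper the corresponding identity \eqref{eq:preseuen} is proved only in Section 5, after global existence, and only for almost every $t$. So the key a priori bound in your continuation argument rests on machinery (\cite{HolRay:07a} and the Eulerian section) that is neither proved here nor needed.

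The paper closes this step entirely in Lagrangian variables: since $X(t)\in\F$, \eqref{eq:lagcoord3} gives $U^2y_\xi\leq\nu$ almost everywhere, and $\abs{y(\xi)-y(\eta)}\leq 1$ for $\xi,\eta\in[0,1]$; inserting these into \eqref{eq:Qsimp}--\eqref{eq:Psimp} yields $\norm{Q(t,\dott)}_{\Linf}+\norm{P(t,\dott)}_{\Linf}\leq C\norm{\nu(0,\dott)}_{L^1}$ by conservation of energy. Then $U_t=-Q$ integrates to $\norm{U(t,\dott)}_{\Linf}\leq\norm{U(0,\dott)}_{\Linf}+Ct\norm{\nu(0,\dott)}_{L^1}$, and $y_t=U$ bounds $\norm{y(t,\dott)}_{L^\infty([0,1])}$ --- a bound you also need but omitted, since it is part of the $V$-component of the $E$-norm and is not controlled by membership in $B_{M'}$. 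With these replacements (and writing the final Gronwall step in integral form, as the $E$-norm of the difference need not be differentiable in $t$), your argument goes through exactly as in the paper.
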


\begin{proof}
  By using Lemma \ref{lem:PQ}, we proceed using a
  contraction argument and obtain the existence of
  short time solutions to \eqref{eq:sys}. Let
  $T$ by the maximal time of existence and assume
  $T<\infty$. Let $(y,U,\nu)$ be a solution of
  \eqref{eq:sys} in $C^1([0,T),E)$ with initial
  data $(y_0,U_0,\nu_0)$. We want to prove that
  \begin{equation}
    \label{eq:enormbound}
    \sup_{t\in[0,T)}\norm{(y(t,\dott),U(t,\dott),\nu(t,\dott))}_E<\infty.
  \end{equation}
  From \eqref{eq:sys}, we get 
  \begin{align}
    \notag
    \int_0^1 \nu(t,\xi)\,d\xi&=\int_0^1 \nu(0,\xi)\,d\xi+\int_0^1\int_0^t(-2Q\,Uy_\xi+\left(3U^2-2P\right)U_\xi)(t,\xi)\,dtd\xi\\
    \notag
    &=\int_0^1 \nu(0,\xi)\,d\xi+\int_0^t\int_0^1(U^3-2PU)_\xi(t,\xi)\,d\xi dt\\
    \label{eq:presen}
    &=\int_0^1 \nu(0,\xi)\,d\xi.
  \end{align}
  Hence,
  $\norm{\nu(t,\dott)}_{L^1}=\norm{\nu(0,\dott)}_{L^1}$. This
  identity corresponds to the conservation of the
  total energy. We now consider a fixed time
  $t\in[0,T)$  which we omit
  in the notation when there is no
  ambiguity. For $\xi$ and $\eta$ in $[0,1]$, we
  have $\abs{y(\xi)-y(\eta)}\leq1$ because $y$ is
  increasing and $y(1)-y(0)=1$. From
  \eqref{eq:lagcoord3}, we infer $U^2y_\xi\leq
  \nu$ and, from \eqref{eq:Qsimp}, we obtain
  \begin{equation*}
    \abs{Q}\leq\frac1{e-1}\int_0^1\sinh(\vert y(\xi)-y(\eta)\vert )\nu(\eta)\,d\eta
    +\int_0^1e^{-\abs{y(\xi)-y(\eta)}}\nu(\eta)\,d\eta.
  \end{equation*}
  Hence, 
  \begin{equation}
    \label{eq:boundQ}
    \norm{Q(t,\dott)}_{L^\infty}\leq C\norm{\nu(t,\dott)}_{L^1}=C\norm{\nu(0,\dott)}_{L^1}
  \end{equation}
  for some constant $C$. Similarly, one prove that
  $\norm{P(t,\dott)}_{L^\infty}\leq C
  \norm{\nu(0,\dott)}_{L^1}$ and therefore
  $\sup_{t\in[0,T)}\norm{Q(t,\dott)}_{L^\infty}$ and
  $\sup_{t\in[0,T)}\norm{P(t,\dott)}_{L^\infty}$ are
  finite. Since $U_t=-Q$, it follows that
  \begin{equation}
    \label{eq:boundU}
    \norm{U(t,\dott)}_{L^\infty}\leq\norm{U(0,\dott)}_{L^\infty}+CT\norm{\nu(0,\dott)}_{L^1}
  \end{equation}
  and
  $\sup_{t\in[0,T)}\norm{U(t,\dott)}_{L^\infty}<\infty$. Since
  $y_t=U$, we have that
  $\sup_{t\in[0,T)}\norm{y(t,\dott)}_{L^\infty}$ is also
  finite.  Thus, we have proved that
  \begin{equation*}
    C_1=\sup_{t\in[0,T)}\{\norm{U(t,\dott)}_\Linf+\norm{P(t,\dott)}_\Linf+\norm{Q(t,\dott)}_\Linf\}
  \end{equation*}
  is finite and depends only on $T$ and
  $\norm{U(0,\dott)}_{L^\infty}+\norm{\nu(0,\dott)}_{L^1}$. Let
  $Z(t)=\norm{y_\xi(t,\dott)}_{L^1}+\norm{U_\xi(t,\dott)}_{L^1}+\norm{\nu(t,\dott)}_{L^1}$. Using
  the semi-linearity of \eqref{eq:equivsysdev}
  with respect to $(y_\xi,U_\xi,\nu)$, we obtain
  \begin{equation*}
    Z(t)\leq Z(0)+C\int_0^tZ(\tau)\,d\tau
  \end{equation*}
  where $C$ is a constant depending only on
  $C_1$. It follows from Gronwall's lemma that
  $\sup_{t\in[0,T)}Z(t)$ is finite, and this
  concludes the proof of the global existence. 
  
  Moreover
  we have proved that
  \begin{equation}
    \label{eq:bdnoUX}
    \norm{U(t,\dott)}_{\Wperun}+\norm{y_\xi(t,\dott)}_{L^1}+\norm{\nu(t,\dott)}_{L^1}\leq C_M
  \end{equation}
  for a constant $C_M$ which depends only on $T$
  and
  $\norm{U(0,\dott)}_{\Wperun}+\norm{y_\xi(0,\dott)}_{L^1}+\norm{\nu(0,\dott)}_{L^1}$. Let
  us prove \eqref{eq:stabSt}. Given $T$ and
  $X_\alpha,X_\beta\in B_M$, from Lemma
  \ref{lem:PQ} and \eqref{eq:bdnoUX}, we get that
  \begin{equation*}
    \norm{U_\alpha(t,\dott)-U_\beta(t,\dott)}_{L^\infty}+\norm{Q_\alpha(t,\dott)-Q_\beta(t,\dott)}_{L^\infty}\leq
    C_M\norm{X_\alpha(t)-X_\beta(t)}_E
  \end{equation*}
  where $C_M$ is a generic constant which depends only on
  $M$ and $T$. Using again \eqref{eq:equivsysdev} and
  Lemma \ref{lem:PQ}, we get that for a given time
  $t\in[0,T]$,
  \begin{multline*}
    \norm{U_{\alpha\xi}-U_{\beta\xi}}_{L^1}
    +\norm{\frac{1}{2}\nu_\alpha+\left(\frac{1}{2}U_\alpha^2-P_\alpha\right)y_{\alpha\xi}-\frac{1}{2}\nu_\beta-\left(\frac{1}{2}U_\beta^2-P_\beta\right)y_{\beta\xi}}_{L^1}\\
    +\norm{-2Q_\alpha\,U_\alpha
      y_{\alpha\xi}+\left(3U_\alpha^2-2P_\alpha\right)U_{\alpha\xi}+2Q_\beta\,U_\beta
      y_{\beta\xi}-\left(3U_\beta^2-2P_\beta\right)U_{\beta\xi}}_{L^1}\\
    \leq C_M\norm{X_\alpha-X_\beta}_{E}.
  \end{multline*}
  Hence,
  $\norm{F(X_\alpha(t))-F(X_\beta(t))}_{E}\leq
  C_M\norm{X_\alpha(t)-X_\beta(t)}_E$ where $F$ is
  defined as in \eqref{eq:condequivsys}. Then,
  \eqref{eq:stabSt} follows from Gronwall's lemma
  applied to \eqref{eq:condequivsys}.
\end{proof}

\section{Relabeling invariance}

We denote by $\Gr$ the subgroup of the group of
homeomorphisms on the unit interval defined as
follows: 
\begin{definition}\label{def:group}
  \label{def:G} Let $\Gr$ be the set of all
  functions $f$ such that $f$ is invertible,
  \begin{align}
    \label{eq:Gcond1}
    &f\in W_{\rm loc}^{1,\infty}(\Real),\
    f(\xi+1)=f(\xi)+1 \text{ for all }\xi\in\Real\text{, and }\\
    \label{eq:Gcond2}
    &f-\id\text{ and }f^{-1}-\id\text{ both belong to }\Wper.
  \end{align}
\end{definition}
The set $\Gr$ can be
interpreted as the set of relabeling
functions. Note that $f\in\Gr$ implies that
\begin{equation*}
  \frac{1}{1+\alpha}\leq f_\xi\leq 1+\alpha
\end{equation*}
for some constant $\alpha>0$. This condition is
also almost sufficient as Lemma 3.2 in
\cite{HolRay:07a} shows. Given a triplet
$(y,U,\nu)\in\F$, we denote by $h$ the total
energy $\norm{\nu}_{L^1}$.  We define the subsets
$\F_\alpha$ of $\F$ as follows
\begin{equation*}
  \F_\alpha=\{X=(y,U,\nu)\in\F\mid  \frac{1}{1+\alpha}\leq \frac1{1+h}(y_\xi+\nu)\leq 1+\alpha\}.
\end{equation*}
The set $\F_0$ is then given by
\begin{equation}
  \label{eq:defF0}
  \F_0=\{X=(y,U,\nu)\in\F\mid  y_\xi+\nu=1+h\}.
\end{equation}
We have $\F=\cup_{\alpha\geq0}\F_\alpha$. We
define the action of the group $\Gr$ on $\F$.
\begin{definition}
  We define the map $\Phi\colon\Gr\times\F\to\F$
  as follows
  \begin{equation*}
    \left\{
      \begin{aligned}
        \bar y&=y\circ f,\\
        \bar U&=U\circ f,\\
        \bar\nu&=\nu\circ f f_\xi,
      \end{aligned}
    \right.
  \end{equation*}
  where $(\bar y,\bar U,\bar
  \nu)=\Phi(f,(y,U,\nu))$. We denote $(\bar y,\bar
  U,\bar\nu)=(y,U,\nu)\act f$.
\end{definition}
\begin{proposition}\label{prop:action}
  The map $\Phi$ defines a group action of $\Gr$ on $\F$.
\end{proposition}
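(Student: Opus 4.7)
The plan has two parts: first verify that $\Phi(f,X)\in\F$ whenever $f\in\Gr$ and $X\in\F$ (so $\Phi$ is well-defined), and then check the two group-action axioms. The arguments are in essence chain-rule computations, but one must be careful because $f$ is only Lipschitz and not $C^1$.

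For the first part, let $X=(y,U,\nu)\in\F$ and $f\in\Gr$, and set $(\bar y,\bar U,\bar\nu)=\Phi(f,X)$. I would first check the periodicity and regularity requirements \eqref{eq:lagcoord1}. Using $y(\xi+1)=y(\xi)+1$ and $f(\xi+1)=f(\xi)+1$, I find $\bar y(\xi+1)=y(f(\xi)+1)=\bar y(\xi)+1$, so $\bar y\in V_1$; periodicity of $\bar U$ and $\bar\nu$ follows from the same identity for $f$ and the periodicity of $U$, $\nu$, and $f_\xi$. The regularity $\bar y,\bar U\in W^{1,\infty}_{\rm loc}$ follows because composition of $W^{1,\infty}_{\rm loc}$ functions with a bi-Lipschitz change of variables stays in $W^{1,\infty}_{\rm loc}$ (applying the Sobolev chain rule, valid since $f_\xi,1/f_\xi\in L^\infty$), and $\bar\nu=\nu\circ f\,f_\xi\in L^\infty$ since both factors are in $L^\infty$.

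Next I would verify the sign and bound conditions \eqref{eq:lagcoord2}. The chain rule gives $\bar y_\xi=(y_\xi\circ f)\,f_\xi\geq 0$ and $\bar\nu=(\nu\circ f)f_\xi\geq 0$, and by the remark after Definition \ref{def:G}, $f_\xi\geq 1/(1+\alpha)$ for some $\alpha>0$, whence
\begin{equation*}
\bar y_\xi+\bar\nu=((y_\xi+\nu)\circ f)\,f_\xi\geq \frac{c}{1+\alpha}>0
\end{equation*}
almost everywhere. The compatibility condition \eqref{eq:lagcoord3} is the least automatic step: using $\bar U_\xi=(U_\xi\circ f)\,f_\xi$,
\begin{equation*}
\bar y_\xi\bar\nu=(y_\xi\nu)\circ f\cdot f_\xi^2=\big((y_\xi U)^2+U_\xi^2\big)\circ f\cdot f_\xi^2=\bar y_\xi^2\bar U^2+\bar U_\xi^2,
\end{equation*}
so the relation survives the change of variables. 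Hence $\Phi(f,X)\in\F$.

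Finally, for the action axioms, $\Phi(\mathrm{id},X)=X$ is immediate from the definition. For composition, given $f,g\in\Gr$ I would first note that $f\circ g\in\Gr$: translation covariance $(f\circ g)(\xi+1)=(f\circ g)(\xi)+1$ is clear, $(f\circ g)^{-1}=g^{-1}\circ f^{-1}$, and the identities $f\circ g-\mathrm{id}=(f-\mathrm{id})\circ g+(g-\mathrm{id})$ and its analogue for the inverse show that both differences lie in $W^{1,\infty}_{\rm per}$. Then one verifies
\begin{equation*}
(X\act f)\act g=\big(y\circ f\circ g,\,U\circ f\circ g,\,(\nu\circ f\cdot f_\xi)\circ g\cdot g_\xi\big)=X\act(f\circ g),
\end{equation*}
where in the third component I use the Sobolev chain rule $(f\circ g)_\xi=(f_\xi\circ g)\,g_\xi$. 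This gives the required covariance and completes the proof. The only nontrivial obstacle is the verification of \eqref{eq:lagcoord3}, which requires recognizing that $\bar U_\xi=(U_\xi\circ f)f_\xi$ (valid a.e.\ because $f$ is bi-Lipschitz) so that the quadratic identity transforms cleanly under the relabeling.
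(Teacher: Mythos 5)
Your proposal is correct and follows essentially the same route as the paper: show $X\act f\in\F$ via the a.e.\ chain rule $\bar y_\xi=(y_\xi\circ f)f_\xi$, $\bar U_\xi=(U_\xi\circ f)f_\xi$ (which makes \eqref{eq:lagcoord2} and \eqref{eq:lagcoord3} transform cleanly), and then verify the composition axiom. The only difference is that the paper proves the chain rule by hand (Rademacher's theorem plus the fact that $f^{-1}$ Lipschitz forces $f^{-1}(B_1)^c$ to be null), whereas you invoke it as the standard bi-Lipschitz Sobolev chain rule, correctly identifying the hypothesis that makes it valid.
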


\begin{proof}
  By the definition it is clear that $\Phi$ satisfies the fundamental property of a group action, that is 
  $X\act f_1 \act f_2=X\act (f_1\circ f_2)$ for all $X\in\F$ and $f_1$, $f_2\in\Gr$. It remains to prove that $X\act f$ indeed belongs to $\F$. We denote $\hat X=(\hat y,\hat U,\hat \nu)=X\act f$, then it is not hard to check that 
  $\hat y(\xi+1)=\hat y(\xi)+1$, $\hat U(\xi+1)=\hat U(\xi)$, and $\hat\nu(\xi+1)=\hat\nu(\xi)$ for all $\xi\in\Real$.
  By definition we have $\hat v=v\circ f f_\xi$, and we will now prove that 
  \begin{equation*}
    \hat y_\xi=y_\xi \circ f f_\xi,\quad  \text{ and }\quad \hat U_\xi=U_\xi \circ f f_\xi,
  \end{equation*}
  almost everywhere. Let $B_1$ be the set where $y$ is differentiable and $B_2$ the set where $\hat y$ and $f$ are differentiable. Using Rademacher's theorem, we get that $\meas(B_1^c)=\meas(B_2^c)=0$. For $\xi\in B_3=B_2\cap f^{-1}(B_1)$, we consider a sequence $\xi_i$ converging to $\xi$ with $\xi_i\not =\xi$ for all $i\in\N$. We have 
  \begin{equation}\label{diffhaty}
    \frac{y(f(\xi_i))-y(f(\xi))}{f(\xi_i)-f(\xi)}\frac{f(\xi_i)-f(\xi)}{\xi_i-\xi}=\frac{\hat y(\xi_i)-\hat y(\xi)}{\xi_i-\xi}.
  \end{equation}
  Since $f$ is continuous, $f(\xi_i)$ converges to $f(\xi)$ and, as $y$ is differentiable at $f(\xi)$, the left-hand side of 
  \eqref{diffhaty} tends to $y_\xi\circ f(\xi)f_\xi(\xi)$, the right-hand side of \eqref{diffhaty} tends to $\hat y_\xi(\xi)$, and we get 
  \begin{equation}\label{diffhaty2}
    y_\xi(f(\xi))f_\xi(\xi)=\hat y_\xi(\xi),
  \end{equation}
  for all $\xi\in B_3$. Since $f^{-1}$ is Lipschitz
  continuous, one-to-one, and $\meas (B_1^c)=0$, we
  have $\meas(f^{-1}(B_1)^c)=0$ and therefore
  \eqref{diffhaty2} holds almost everywhere.  
  One proves the
  other identity similarly. As $f_\xi>0$ almost
  everywhere, we obtain immediately that
  \eqref{eq:lagcoord2} and \eqref{eq:lagcoord3} are
  fulfilled. That \eqref{eq:lagcoord1} is also
  satisfied follows from the following
  considerations: $\norm{\hat
    y_\xi}_{L^1}=\norm{y_\xi}_{L^1}$, as $y_\xi$ is
  periodic with period $1$. The same argument
  applies when considering $\norm{\hat U_\xi}_{L^1}$
  and $\norm{\hat \nu}_{L^1}$. As $U$ is periodic
  with period $1$, we can also conclude that
  $\norm{\hat U}_{L^\infty}=\norm{U}_{L^\infty}$. As
  $f\in\Gr$, one obtains that $\norm{\hat
    y}_{L^\infty}$ is bounded, but not equal to 
  $\norm{y}_{L^\infty}$.
\end{proof}
Note that the set $B_M$ is invariant with respect
to relabeling while the $E$-norm is not, as the
following example shows: Consider the function
$y(\xi)=\xi\in V_1$, and $f\in\Gr$, then this
yields
\begin{equation*}
  \norm{y(f(\xi))}_{L^\infty([0,1])}=\norm{f(\xi)}_{L^\infty([0,1])}.
\end{equation*}
Hence, the $L^\infty$-norm of $y(f(\xi))$ will
always depend on $f$.

Since $\Gr$ is acting on $\F$, we can consider the
quotient space $\quot$ of $\F$ with respect to the
group action.  Let us introduce the subset $\H$ of
$\F_0$ defined as follows
\begin{equation}\label{eq:calH}
  \H=\{(y,U,\nu)\in\F_0\mid  \int_0^1y(\xi)\,d\xi=0\}.
\end{equation}
It turns out that $\H$ contains a unique
representative in $\F$ for each element of $\quot$,
that is, there exists a bijection between $\H$ and
$\quot$. In order to prove this we introduce two
maps $\Pi_1\colon\F\to\F_0$ and
$\Pi_2\colon\F_0\to\H$ defined as follows
\begin{equation}
  \label{eq:defpi1}
  \Pi_1(X)=X\act f^{-1}
\end{equation}
with
$f=\frac1{1+h}(y+\int_{0}^\xi\nu(\eta)\,d\eta)\in\Gr$
and $X=(y,U,\nu)$, and
\begin{equation}
  \label{eq:defpi2}
  \Pi_2(X)=X(\xi-a)
\end{equation}
with $a=\int_0^1y(\xi)\,d\xi$. First, we have to
prove that $f$ indeed belongs to $\Gr$. We have
\begin{align*}
  f(\xi+1)&=\frac{1}{1+h}\big(y(\xi+1)+\int_0^{\xi+1}\nu(\eta)\,d\eta\big)\\
  &=\frac{1}{1+h}\big(y(\xi)+1+\int_0^\xi\nu(\eta)\,d\eta+h\big)=f(\xi)+1
\end{align*}
and this proves \eqref{eq:Gcond1}. Since
$(y,U,\nu)\in\F$, there exists a constant $c\geq1$ such
that $\frac1c\leq f_\xi\leq c$ for almost every
$\xi$ and therefore \eqref{eq:Gcond2} follows from
an application of Lemma 3.2 in
\cite{HolRay:07a}. After noting that the group
action lets  the quantity
$h=\norm{\nu}_{L^1}$ invariant, it is not hard to check that
$\Pi_1(X)$ indeed belongs to $\F_0$, that is,
$\frac{1}{1+\bar h}(\bar y_\xi+\bar\nu)=1$ where
we denote $(\bar y,\bar U,\bar\nu)=\Pi_1(X)$. Let
us prove that $(\bar y,\bar
U,\bar\nu)=\Pi_2(y,U,\nu)$ belongs to $\H$ for any
$(y,U,\nu)\in\F_0$. On the one hand, we have
$\frac{1}{1+\bar h}(\bar y_\xi+\bar\nu)=1$ because
$\bar h=h$ and $\frac1{1+h}(y_\xi+\nu)=1$ as
$(y,U,\nu)\in\F_0$. On the other hand,
\begin{equation}
  \label{eq:compint1}
  \int_0^1\bar
  y(\xi)\,d\xi=\int_{-a}^{1-a}y(\xi)\,d\xi=\int_0^1
  y(\xi)\,d\xi+\int_{-a}^0y(\xi)\,d\xi+\int_1^{1-a}y(\xi)\,d\xi
\end{equation}
and, since $y(\xi+1)=y(\xi)+1$, we obtain
\begin{equation}
  \label{eq:compint2}
  \int_0^1\bar y(\xi)\,d\xi=\int_0^1
  y(\xi)\,d\xi+\int_{-a}^0y(\xi)\,d\xi+\int_0^{-a}y(\xi)\,d\xi-a=\int_0^1y(\xi)\,dx-a=0.
\end{equation}
Thus $\Pi_2(X)\in\H$. Note that the definition
\eqref{eq:defpi2} of $\Pi_2$ can be rewritten as
\begin{equation*}
  \Pi_2(X)=X\act \tau_a
\end{equation*}
where $\tau_a:\xi\mapsto\xi-a$ denotes the
translation of length $a$ so that $\Pi_2(X)$ is a
relabeling  of $X$.

\begin{definition}
  We denote by $\Pi$ the projection of $\F$ into
  $\H$ given by $\Pi_1\circ\Pi_2$.
\end{definition}
One checks directly that $\Pi\circ\Pi=\Pi$. The
element $\Pi(X)$ is the unique relabeled version
of $X$ which belongs to $\H$ and therefore we have the following result.
\begin{lemma}\label{lemma:bijection}
  The sets $\quot$ and $\H$ are in bijection.
\end{lemma}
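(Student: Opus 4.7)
The plan is to show that the natural map $\bar\iota : \H \to \quot$ obtained by composing the inclusion $\H \hookrightarrow \F$ with the quotient projection $\F \to \quot$ is a bijection. The fact that $\Pi \circ \Pi = \Pi$ (observed just before the lemma) together with the explicit form $\Pi_1(X) = X \act f^{-1}$ and $\Pi_2(X) = X \act \tau_a$ will do most of the work.

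For surjectivity, let $[X] \in \quot$ with representative $X \in \F$. Since $\Pi = \Pi_1 \circ \Pi_2$ is a composition of two relabelings (and $\tau_a \in \Gr$ because the translation clearly satisfies the conditions in Definition \ref{def:G}), $\Pi(X)$ lies in the same orbit as $X$. The preceding computations show $\Pi(X) \in \H$, hence $\bar\iota(\Pi(X)) = [X]$.

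For injectivity, suppose $X = (y,U,\nu)$ and $Y = (\bar y,\bar U,\bar\nu)$ both lie in $\H$ and represent the same class, so $Y = X \act f$ for some $f \in \Gr$. I would exploit the two defining conditions of $\H$ in turn. First, from the chain rule computation in Proposition \ref{prop:action} together with the fact that the total energy $h = \norm{\nu}_{L^1}$ is relabeling invariant, the identity $\bar y_\xi + \bar\nu = 1+h$ becomes
\begin{equation*}
  (y_\xi + \nu)\circ f \cdot f_\xi = 1 + h \quad \text{a.e.}
\end{equation*}
Using $y_\xi + \nu = 1+h$ for $X \in \F_0$, this forces $f_\xi = 1$ almost everywhere; since $f \in \Winf_{\text{loc}}$ is absolutely continuous, integration gives $f(\xi) = \xi + c$ for some constant $c \in \Real$.

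It then remains to show $c = 0$. The second condition $\int_0^1 \bar y(\xi)\,d\xi = 0$ combined with $\int_0^1 y(\xi)\,d\xi = 0$ gives, after the change of variables $\eta = \xi + c$,
\begin{equation*}
  0 = \int_0^1 y(\xi + c)\,d\xi = \int_c^{1+c} y(\eta)\,d\eta,
\end{equation*}
and splitting the interval and using the quasi-periodicity $y(\eta+1) = y(\eta)+1$ (as in the computation \eqref{eq:compint1}--\eqref{eq:compint2}) one finds that this integral equals $c$ (modulo the integer part of $c$; a routine case split on the integer and fractional parts of $c$ handles it). Hence $c = 0$, so $f = \id$ and $X = Y$.

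The only nuance I expect is making the last step airtight when $c$ is not a priori in $[0,1)$; but since shifting $c$ by an integer does not change $f$'s action on periodic data and one may reduce to $c \in [0,1)$ by writing $c = k + c'$ and absorbing $k$, this reduces to the clean identity above. Everything else is essentially bookkeeping using relabeling invariance of $h$ and the explicit structure of $\H \subset \F_0$.
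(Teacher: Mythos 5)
Your proof is correct and follows essentially the same route as the paper, which merely asserts that $\Pi(X)$ is the unique relabeled representative of $[X]$ lying in $\H$; you make the uniqueness explicit by deducing $f_\xi=1$ a.e.\ from the $\F_0$-condition and relabeling invariance of $h$, and then $f=\id$ from the normalization $\int_0^1 y\,d\xi=0$. One small remark: the worry about the integer part of $c$ is unnecessary, since
\begin{equation*}
  \frac{d}{dc}\int_c^{1+c}y(\eta)\,d\eta=y(c+1)-y(c)=1
\end{equation*}
shows $\int_0^1 y(\xi+c)\,d\xi=c$ for \emph{every} real $c$, and the proposed justification (that an integer shift does not change the action) should be dropped in any case because $y$ is only quasi-periodic, $y(\xi+k)=y(\xi)+k$.
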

Given any element $[X]\in\quot$, we associate
$\Pi(X)\in\H$. This mapping is well-defined and is
a bijection.
\begin{lemma}
  \label{lem:equivPi}
  The mapping $S_t$ is equivariant, that is, 
  \begin{equation}\label{equivS}
    S_t(X\act f)=S_t(X)\act f.
  \end{equation}
\end{lemma}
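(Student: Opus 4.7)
The plan is to establish equivariance by showing that if $X(t)=S_t(X)$ solves the system \eqref{eq:sys} with initial data $X$, then the relabeled curve $\hat X(t):=X(t)\act f$ also solves \eqref{eq:sys}, this time with initial data $X\act f$. Since both $\hat X(t)$ and $S_t(X\act f)$ then solve the same Cauchy problem and the semigroup $S$ provided by Theorem \ref{th:global} is uniquely defined, uniqueness will force $\hat X(t)=S_t(X\act f)$, which is precisely \eqref{equivS}.

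The core computation is to verify the equivariance of the nonlocal operators $\Q$ and $\PP$, i.e.\ to show
\begin{equation*}
  \Q(X\act f)=\Q(X)\circ f,\qquad \PP(X\act f)=\PP(X)\circ f.
\end{equation*}
Here I work from the representations \eqref{eq:Q} and \eqref{eq:P} on the real line. Writing $\hat X=(\hat y,\hat U,\hat\nu)=X\act f$ and using the identities $\hat y_\xi=y_\xi\circ f\, f_\xi$ and $\hat U_\xi=U_\xi\circ f\, f_\xi$ established in Proposition \ref{prop:action}, one gets $(\hat U^2\hat y_\xi+\hat\nu)(\eta)=(U^2y_\xi+\nu)(f(\eta))\,f_\xi(\eta)$. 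Because $f\in\Gr$ is strictly increasing on $\Real$ (monotonicity from invertibility, together with $f(\xi+1)=f(\xi)+1$, forces $f$ to be an increasing bijection of $\Real$), one has $\sign(\xi-\eta)=\sign(f(\xi)-f(\eta))$, and the change of variables $\eta'=f(\eta)$ in \eqref{eq:Q} and \eqref{eq:P} turns the integrals for $\hat Q$, $\hat P$ into the same integrals for $Q$, $P$ evaluated at $f(\xi)$.

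With equivariance of $\Q$ and $\PP$ in hand, verifying that $\hat X(t)=X(t)\act f$ solves \eqref{eq:sys} is direct. The first two equations reduce to $(y\circ f)_t=y_t\circ f=U\circ f=\hat U$ and $(U\circ f)_t=-Q\circ f=-\hat Q$. For the $\nu$-equation one differentiates $\hat\nu=\nu\circ f\,f_\xi$ in time (using that $f$ does not depend on $t$) and substitutes the third line of \eqref{eq:sys} for $\nu_t\circ f$, then inserts the relations $\hat y_\xi=y_\xi\circ f\, f_\xi$ and $\hat U_\xi=U_\xi\circ f\, f_\xi$ to collect the right-hand side as $-2\hat Q\hat U\hat y_\xi+(3\hat U^2-2\hat P)\hat U_\xi$.

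The only step that requires a bit of care is the change of variables argument used to establish the equivariance of $\Q$ and $\PP$; everything else is a routine application of the chain rule together with Proposition \ref{prop:action} (which already guarantees that $X\act f\in\F$, so both sides of \eqref{equivS} are well-defined solutions in $\F$) and the uniqueness part of Theorem \ref{th:global}. The initial-data check $\hat X(0)=X\act f=(X\act f)$ is immediate from the definition of the action, completing the argument.
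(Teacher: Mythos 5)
Your proposal is correct and follows essentially the same route as the paper: both show that $X(t)\act f$ solves \eqref{eq:sys} by performing the change of variables $\eta=f(\eta')$ in the nonlocal terms (using $\hat y_\xi=y_\xi\circ f\,f_\xi$, $\hat\nu=\nu\circ f\,f_\xi$ and the monotonicity of $f$ to preserve the sign factor), and then conclude $S_t(X\act f)=S_t(X)\act f$ by uniqueness of solutions from Theorem \ref{th:global}. Your explicit statement of the equivariance of $\Q$ and $\PP$ and the check of the $\nu$-equation merely spell out details the paper summarizes as ``treating similarly the other terms.''
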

\begin{proof}
  For any $X_0=(y_0,U_0,\nu_0)\in\F$ and $f\in
  \Gr$, we denote $\bar X_0=(\bar y_0,\bar U_0,
  \bar \nu_0)=X_0\act f$, $X(t)=S_t(X_0)$, and
  $\bar X(t)=S_t(\bar X_0)$.  We claim that
  $X(t)\act f$ satisfies \eqref{eq:sys} and
  therefore, since $X(t)\act f$ and $\bar X(t)$
  satisfy the same system of differential
  equations with the same initial data, they are
  equal. We denote $\hat X(t)=(\hat y(t), \hat
  U(t), \hat \nu(t))=X(t)\act f$. Then we obtain
  \begin{equation*}
    \hat U_t=\frac{1}{4}\int_\Real \sign(\xi-\eta)\exp\big(-\sign(\xi-\eta)(\hat y(\xi)-y(\eta))\big)\big[U^2y_\xi+\nu\big](\eta)d\eta.
  \end{equation*}
  As $\hat y_\xi(\xi)=y_\xi(f(\xi))f_\xi(\xi)$ and $\hat \nu(\xi)=\nu(f(\xi))f_\xi(\xi)$ for almost every $\xi\in\Real$, we obtain after the change of variables $\eta=f(\eta^\prime)$,
  \begin{equation*}
    \hat U_t=\frac{1}{4}\int_\Real\sign(\xi-\eta)\exp\big(-\sign(\xi-\eta)(\hat y(\xi)-\hat y(\eta))\big)\big[\hat U^2 \hat y_\xi +\hat \nu\big](\eta)d\eta.
  \end{equation*}
  Treating similarly the other terms in \eqref{eq:sys}, it follows that $(\hat y, \hat U, \hat \nu)$ is a solution of \eqref{eq:sys}. Thus, since $(\hat y, \hat U, \hat \nu)$ and $(\bar y, \bar U,\bar \nu)$ satisfy the same system of ordinary differential equations with the same initial conditions, they are equal and \eqref{equivS} is proved.
\end{proof}

From this lemma we get that
\begin{equation}
  \label{eq:PiSt}
  \Pi\circ S_t\circ \Pi=\Pi\circ S_t.
\end{equation}

\begin{definition} \label{def:barS}
  We define the semigroup $\bar S_t$ on $\H$ as
  \begin{equation*}
    \bar S_t=\Pi\circ S_t.
  \end{equation*}
\end{definition}

The semigroup property of $\bar S_t$ follows from
\eqref{eq:PiSt}. Using the same approach as in
\cite{HolRay:07a}, we can prove that $\bar S_t$ is
continuous with respect to the norm of $E$. It
follows basically of the continuity of the mapping
$\Pi$ but $\Pi$ is not Lipschitz continuous and
the goal of the next section is to improve this
result and find a metric that makes $\bar S_t$
Lipschitz continuous.

\section{Lipschitz metric for the semigroup $\bar S_t$}

\begin{definition}
  Let $X_\alpha,X_\beta\in\F$, we define
  $J(X_\alpha,X_\beta)$ as
  \begin{equation}
    \label{eq:defJ}
    J(X_\alpha,X_\beta)=\inf_{f,g\in\Gr}\norm{X_\alpha\act f-X_\beta\act g}_E.
  \end{equation}
\end{definition}

Note that, for any $X_\alpha,X_\beta\in\F$ and
$f,g\in\Gr$, we have
\begin{equation}
  \label{eq:invralJ}
  J(X_\alpha\act f,X_\beta\act g)=J(X_\alpha,X_\beta).
\end{equation}
It means that $J$ is invariant with respect to
relabeling. The mapping $J$ does not satisfy the
triangle inequality, which is the reason why we introduce the
mapping $d$.

\begin{definition}\label{def:alberto}
  Let $X_\alpha,X_\beta\in\F$, we define
  $d(X_\alpha,X_\beta)$ as
  \begin{equation}
    \label{eq:defdist}
    d(X_\alpha,X_\beta)=\inf \sum_{i=1}^NJ(X_{n-1},X_n)
  \end{equation}
  where the infimum is taken over all finite sequences
  $\{X_n\}_{n=0}^N\in\F$ which satisfy
  $X_0=X_\alpha$ and $X_N=X_\beta$.
\end{definition}

For any $X_\alpha,X_\beta\in\F$ and $f,g\in\Gr$,
we have
\begin{equation}
  \label{eq:invralJ_d}
  d(X_\alpha\act f,X_\beta\act g)=d(X_\alpha,X_\beta),
\end{equation}
and $d$ is also invariant with respect to
relabeling.

\begin{remark} The definition of the metric
  $d(X_\alpha,X_\beta)$ is the discrete version of
  the one introduced in \cite{BHR}. In \cite{BHR},
  the authors introduce the metric that we
  denote here  as $\tilde d$ where
  \begin{equation*}
    \tilde d(X_\alpha,X_\beta)=\inf\int_{0}^1\tnorm{X_s(s)}_{X(s)}\,ds
  \end{equation*}
  where the infimum is taken over all smooth path
  $X(s)$ such that $X(0)=X_\alpha$ and
  $X(1)=X_\beta$ and the triple norm of an element
  $V$ is defined at a point $X$ as
  \begin{equation*}
    \tnorm{V}=\inf_{g}\norm{V-gX_\xi}
  \end{equation*}
  where $g$ is a scalar function, see \cite{BHR}
  for more details.  The metric $\tilde d$ also
  enjoys the invariance relabeling property
  \eqref{eq:invralJ_d}. The idea behind the
  construction of $d$ and $\tilde d$ is the same:
  We measure the distance between two points
  in a way where two
  relabeled versions of the same point are
  identified. The difference is that in the case
  of $d$ we use a set of points whereas in the
  case of $\tilde d$ we use a curve to join two
  elements $X_\alpha$ and $X_\beta$. Formally, we
  have
  \begin{equation}
    \label{eq:disconteq}
    \lim_{\delta\to 0}\frac1{\delta}J(X(s),X(s+\delta))=\tnorm{X_s}_{X(s)}.
  \end{equation}
\end{remark}

We need to introduce the subsets of bounded energy
in $\F_0$.

\begin{definition}
  We denote by $\F^M$ the set
  \begin{equation*}
    \F^M=\{X=(y,U,\nu)\in \F\mid  h=\norm{\nu}_{L^1}\leq M\}
  \end{equation*}
  and let $\H^M=\H\cap\F^M$.
\end{definition}
The important propery of the set $\F^M$ is that it
is preserved both by the flow, see
\eqref{eq:presen}, and relabeling. Let us prove
that
\begin{equation}
  \label{eq:incFMBM}
  B_{M}\cap\H\subset \H^M \subset B_{\bar M}\cap\H
\end{equation}
for $\bar M=6(1+M)$ so that the sets $B_M\cap\H$
and $\H^M$ are in this sense equivalent. From
\eqref{eq:defF0}, we get
$\norm{y_\xi}_{L^\infty}\leq 1+M$ which implies
$\norm{y_\xi}_{L^1}\leq 1+M$. By
\eqref{eq:lagcoord3}, we get that $U_\xi^2\leq
y_\xi\nu\leq\frac12(y_\xi^2+\nu^2)\leq
\frac12(y_\xi+\nu)^2\leq \frac12(1+h)^2$ and
therefore $\norm{U_\xi}_{L^1}\leq 1+M$. Since
$\int_{0}^1y_\xi(\eta)\,d\eta=1$ and $y_\xi\geq0$,
the set $\{\xi\in[0,1]\mid 
y_\xi(\xi)\geq\frac12\}$ has strictly positive
measure. For a point $\xi_0$ in this set, we get,
by \eqref{eq:lagcoord3}, that
$U^2(\xi_0)\leq\frac{\nu(\xi_0)}{y_\xi(\xi_0)}\leq
2(1+M)$. Hence, $\norm{U}_{L^\infty}\leq
\abs{U(\xi_0)}+\norm{U_\xi}_{L^1}\leq 3(1+M)$ and,
finally,
\begin{equation*}
  \norm{U}_{\Wperun}+\norm{y_\xi}_{L^1}+\norm{\nu}_{L^1}\leq 6(1+M),
\end{equation*}
which concludes the proof of \eqref{eq:incFMBM}.

\begin{definition}
  Let $d_M$ be the metric on $\H^M$ which is
  defined, for any $X_\alpha,X_\beta\in\H^M$, as
  \begin{equation}
    \label{eq:defdM}
    d_M(X_\alpha,X_\beta)=\inf \sum_{i=1}^NJ(X_{n-1},X_n)
  \end{equation}
  where the infimum is taken over all finite
  sequences $\{X_n\}_{n=0}^N\in\H^M$ which
  satisfy $X_0=X_\alpha$ and $X_N=X_\beta$.
\end{definition}

\begin{lemma}
  \label{lem:LinfbdJ}
  For any $X_\alpha,X_\beta\in\H^M$, we have
  \begin{equation}
    \label{eq:LinfbdJ}
    \norm{y_\alpha-y_\beta}_{L^\infty}+\norm{U_\alpha-U_\beta}_{L^\infty}+\abs{h_\alpha-h_\beta}\leq C_M d_M(X_\alpha,X_\beta)
  \end{equation}
  for some fixed constant $C_M$ which depends only
  on $M$.
\end{lemma}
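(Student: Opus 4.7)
The plan is to reduce the desired inequality, via the triangle inequality for the three terms on the left and the definition \eqref{eq:defdM} of $d_M$, to the single-pair estimate
\[
\|y_X-y_Y\|_{L^\infty}+\|U_X-U_Y\|_{L^\infty}+|h_X-h_Y|\le C_M\,J(X,Y)
\]
for $X,Y\in\H^M$. Unwinding the infimum in \eqref{eq:defJ}, it is in turn enough to prove the same bound with $J(X,Y)$ replaced by $\|\bar X-\bar Y\|_E$ for arbitrary $f,g\in\Gr$, where $\bar X=X\act f$ and $\bar Y=Y\act g$. The estimate for $|h_X-h_Y|$ is immediate from the relabeling invariance of $h$: $|h_X-h_Y|=\big|\|\bar\nu_X\|_{L^1}-\|\bar\nu_Y\|_{L^1}\big|\le\|\bar\nu_X-\bar\nu_Y\|_{L^1}\le\|\bar X-\bar Y\|_E$.

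For $\|y_X-y_Y\|_{L^\infty}$ I would exploit the rigidity of the canonical form $\H$. Since $X,Y\in\F_0$, we have $y_{X,\xi}=1+h_X-\nu_X\in[0,1+M]$ almost everywhere, so $y_X$ is $(1+M)$-Lipschitz; moreover $y_X-y_Y$ is $1$-periodic (both lie in $V_1$) and has vanishing mean, because $X,Y\in\H$ forces $\int_0^1 y_X\,d\xi=\int_0^1 y_Y\,d\xi=0$ by \eqref{eq:calH}. Consequently
\[
\|y_X-y_Y\|_{L^\infty}\le\|y_{X,\xi}-y_{Y,\xi}\|_{L^1}\le|h_X-h_Y|+\|\nu_X-\nu_Y\|_{L^1}.
\]
For $\|U_X-U_Y\|_{L^\infty}$ I would use the companion Lipschitz bound coming from \eqref{eq:lagcoord3} and the arithmetic--geometric mean inequality: $U_{X,\xi}^2\le y_{X,\xi}\nu_X\le\tfrac14(y_{X,\xi}+\nu_X)^2\le\tfrac14(1+M)^2$, so $U_X$ is $\tfrac{1+M}{2}$-Lipschitz. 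Since $U_X-U_Y$ is periodic but not mean-zero, one picks a point $\xi_0$ with $y_{X,\xi}(\xi_0)\ge\tfrac12$ (such a point exists since $\int_0^1 y_{X,\xi}\,d\xi=1$), uses \eqref{eq:lagcoord3} to control $|U_X(\xi_0)|^2\le 2(1+M)$, and concludes via $\|U_X-U_Y\|_{L^\infty}\le |U_X(\xi_0)-U_Y(\xi_0)|+\|U_{X,\xi}-U_{Y,\xi}\|_{L^1}$.

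The main obstacle is to bound the canonical quantities $\|\nu_X-\nu_Y\|_{L^1}$ and $\|U_{X,\xi}-U_{Y,\xi}\|_{L^1}$ (and $|U_X(\xi_0)-U_Y(\xi_0)|$) in terms of the relabeled distance $\|\bar X-\bar Y\|_E$, i.e., showing that the canonical-form data depends Lipschitz continuously on an arbitrary relabeled representative. The plan is to compare the two relabeling maps $f,g\in\Gr$, which from $X,Y\in\F_0$ satisfy the explicit identities
\[
f_\xi=\frac{\bar y_{X,\xi}+\bar\nu_X}{1+h_X},\qquad g_\xi=\frac{\bar y_{Y,\xi}+\bar\nu_Y}{1+h_Y}.
\]
These yield $\|f_\xi-g_\xi\|_{L^1}\le C_M\|\bar X-\bar Y\|_E$ directly, after using $|h_X-h_Y|\le\|\bar X-\bar Y\|_E$ and the a~priori bound $\int_0^1(\bar y_{Y,\xi}+\bar\nu_Y)\,d\xi=1+h_Y\le 1+M$. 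The translation ambiguity in $f-g$ is controlled using the identity $f^{-1}(0)=-\int_0^1\bar y_X(\eta)f_\xi(\eta)\,d\eta$, obtained from $\int_0^1 y_X\,d\xi=0$ by the change of variables $\eta=f^{-1}(\xi)$ together with the periodicity computation $\int_a^{a+1}\bar y_X f_\xi\,d\eta=\int_0^1\bar y_X f_\xi\,d\eta+a$ valid for $\bar y_X\in V_1$ and $f_\xi$ periodic, and the analogous identity for $g$. A careful splitting of $\int_0^1(\bar y_X f_\xi-\bar y_Y g_\xi)\,d\eta$ that separates the "linear" and "periodic" parts of $\bar y_X,\bar y_Y\in V_1$ (which need not be uniformly bounded on $[0,1]$, but whose deviations from identity are) then produces $\|f-g\|_{L^\infty}\le C_M\|\bar X-\bar Y\|_E$. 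Finally, a change-of-variables computation expressing $\nu_X-\nu_Y$, $U_{X,\xi}-U_{Y,\xi}$ and $U_X(\xi_0)-U_Y(\xi_0)$ in terms of $\bar\nu_X-\bar\nu_Y$, $\bar U_{X,\xi}-\bar U_{Y,\xi}$ and $f-g$, combined with the Lipschitz bounds on $\nu_Y$, $U_{Y,\xi}$ available on $\H^M$, closes the required estimate.
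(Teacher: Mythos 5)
Your overall scaffolding coincides with the paper's: reduce to a single pair and arbitrary relabelings $f,g\in\Gr$, get $\abs{h_X-h_Y}\le\norm{X\act f-Y\act g}_E$ from relabeling invariance of $h$, and control $\norm{f-g}_{L^\infty}$ by $C_M\norm{X\act f-Y\act g}_E$ using $f_\xi=(\bar y_{X,\xi}+\bar\nu_X)/(1+h_X)$ together with the zero-mean normalization in $\H$ to fix the translation constant. The gap is in how you then reach $\norm{y_X-y_Y}_{L^\infty}$ and $\norm{U_X-U_Y}_{L^\infty}$: you route them through the canonical-form quantities $\norm{\nu_X-\nu_Y}_{L^1}$ and $\norm{U_{X,\xi}-U_{Y,\xi}}_{L^1}$ and claim these are bounded by $C_M\norm{X\act f-Y\act g}_E$. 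That claim is false, and no choice of constants can save it. Take $U\equiv0$, $h\le 1$, $a=h/(1+h)$, $\nu_X=(1+h)\chi_{A}$ with $A=\bigcup_{k=0}^{N-1}[k/N,k/N+a/N]$, $y_{X,\xi}=1+h-\nu_X$, and let $Y$ be the same configuration shifted by $1/(2N)$, with additive constants chosen so that both elements lie in $\H^M$. Relabeling $X$ by the translation $\xi\mapsto\xi-1/(2N)$ (which belongs to $\Gr$) matches $\nu$, $U$ and $y_\xi$ exactly, so $J(X,Y)\le 1/(2N)\to0$, while $\norm{\nu_X-\nu_Y}_{L^1}=2h$ for every $N$. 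The same mechanism defeats your final change-of-variables step: $\nu_X=(\bar\nu_X/f_\xi)\circ f^{-1}$ and $\nu_Y=(\bar\nu_Y/g_\xi)\circ g^{-1}$, and closeness of $f$ and $g$ in $L^\infty$ does not make compositions with $f^{-1}$ and $g^{-1}$ close in $L^1$ when the composed function is merely bounded; on $\H^M$ there are no Lipschitz bounds on $\nu_Y$ or $U_{Y,\xi}$ (only $L^\infty$ bounds), so the "Lipschitz bounds on $\nu_Y$, $U_{Y,\xi}$" you invoke do not exist. Indeed, the whole point of $d_M$ is that it is strictly weaker than the $E$-norm precisely in the $\nu$- and $U_\xi$-components, so any proof that passes through $\norm{\nu_X-\nu_Y}_{L^1}$ or $\norm{U_{X,\xi}-U_{Y,\xi}}_{L^1}$ must fail; your inequality $\norm{y_X-y_Y}_{L^\infty}\le\norm{y_{X,\xi}-y_{Y,\xi}}_{L^1}$, while true, discards exactly the cancellation that makes the lemma hold.

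The paper closes the argument without ever touching those $L^1$ differences: it writes $y_X-y_Y=(y_X\circ f-y_Y\circ f)\circ f^{-1}$, splits $y_X\circ f-y_Y\circ f=(y_X\circ f-y_Y\circ g)+(y_Y\circ g-y_Y\circ f)$, and bounds the second term by $\norm{y_{Y,\xi}}_{L^\infty}\norm{f-g}_{L^\infty}$, using only the uniform bounds $\norm{y_{Y,\xi}}_{L^\infty},\norm{U_{Y,\xi}}_{L^\infty}\le 2(1+M)$ valid on $\H^M$; the identical splitting handles $U_X-U_Y$. If you replace your step "bound the canonical $L^1$ quantities" by this direct $L^\infty$ comparison, the $\norm{f-g}_{L^\infty}$ estimate you already planned (which is the paper's \eqref{eq:bfdifffg}) does the rest, and the summation over chains then gives \eqref{eq:LinfbdJ} exactly as you outlined.
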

\begin{proof}
  First, we prove that for any
  $X_\alpha,X_\beta\in\H^M$, we have
  \begin{equation}
    \label{eq:linfcompj}
    \norm{y_\alpha-y_\beta}_{L^\infty}+\norm{U_\alpha-U_\beta}_{L^\infty}+\abs{h_\alpha-h_\beta}\leq C_M J(X_\alpha,X_\beta)
  \end{equation}
  for some constant $C_M$ which depends only on
  $M$.  By a change of variables in the integrals,
  we obtain
  \begin{align*}
    \abs{h_\alpha-h_\beta}&=\abs{\int_{0}^1\nu_\alpha\circ
      f f_\xi\,d\xi-\int_{0}^1\nu_\beta\circ g
      g_\xi\,d\xi}\\
    &\leq\norm{X_\alpha\act f-X_\beta\act g}_E.
  \end{align*}
  We have 
  \begin{align}
    \notag
    \norm{y_\alpha-y_\beta}_{L^\infty}&+\norm{U_\alpha-U_\beta}_{L^\infty}\\
    \notag &\leq \norm{X_\alpha\act
      f-X_\beta\act g}_E+\norm{y_\beta\circ
      f-y_\beta\circ
      g}_{L^\infty}+\norm{U_\beta\circ
      f-U_\beta\circ g}_{L^\infty}\\
    \label{eq:linfdifb}
    &\leq \norm{X_\alpha\act f-X_\beta\act
      g}_E+(\norm{y_{\beta\xi}}_{L^\infty}+\norm{U_{\beta\xi}}_{L^\infty})\norm{f-g}_{L^\infty}.
  \end{align}
  From the definition of $\H^M$ we get that,
  for any element $X=(y,U,\nu)\in\H^M$, we have
  $\norm{y_{\xi}}_{L^\infty}+\norm{\nu}_{L^\infty}\leq
  2(1+M)$. Since $U_\xi^2\leq y_\xi\nu$, from
  \eqref{eq:lagcoord3}, it follows that
  $\norm{U_\xi}_{L^\infty}\leq2(1+M)$. Thus,
  \eqref{eq:linfdifb} yields
  \begin{equation}
    \label{eq:linfdifb2}
    \norm{y_\alpha-y_\beta}_{L^\infty}+\norm{U_\alpha-U_\beta}_{L^\infty}\leq \norm{X_\alpha\act f-X_\beta\act
      g}_E+4(1+M)\norm{f-g}_{L^\infty}.
  \end{equation}
  We denote by $C_M$ a generic constant which
  depends only on $M$. The identity
  \eqref{eq:linfcompj} will be proved when we
  prove
  \begin{equation}
    \label{eq:bfdifffg}
    \norm{f-g}_{L^\infty}\leq C_M\norm{X_\alpha\act f-X_\beta\act g}_E.
  \end{equation}
  By using the definition of $\H$, we get that
  \begin{align}
    \notag
    \norm{f_\xi-g_\xi}_{L^1}&=\norm{\frac{1}{1+h_\alpha}(y_{\alpha\xi}\circ
      f+\nu_{\alpha}\circ
      f)f_\xi-\frac{1}{1+h_\beta}(y_{\beta\xi}\circ
      g+\nu_{\beta}\circ g)g_\xi}_{L^1}\\
    \notag
    &\leq\frac{\abs{h_\alpha-h_\beta}}{1+h_\beta}+\frac{1}{1+h_\beta}\norm{X_\alpha\act
      f-X_\beta\act g}_E\\
    \label{eq:difgxibd}
    &\leq C_M\norm{X_\alpha\act f-X_\beta\act
      g}_E.
  \end{align}
  Let $\delta=g(0)-f(0)$.  Similar to
  \eqref{eq:compint1} and \eqref{eq:compint2}, we
  can conclude that
  \begin{align}\nn
    \int_0^1 y_\beta\circ (f+\delta)f_\xi d\xi
    & = \int_{f(0)+\delta}^{f(0)+1+\delta} y_\beta d\xi \\ \nn
    & =\int_{f(0)+\delta}^0 y_\beta d\xi +\int_0^1 y_\beta d\xi +\int_1^{1+f(0)+\delta} y_\beta d\xi \\ \nn
    & = \int_{f(0)+\delta}^0 y_\beta d\xi +\int_0^1 y_\beta d\xi +\int_0^{f(0)+\delta} y_\beta d\xi +f(0)+\delta \\ \nn
    & =f(0)+\delta.
  \end{align}
  Thus we have $\delta=\int_0^1y_\beta\circ
  (f+\delta) f_\xi\,d\xi-f(0)$ and analogously
  $0=\int_0^1 y_\beta \circ (f)f_\xi d\xi-f(0)$.
  Hence,
  \begin{equation}
    \label{eq:deltabd0}
    \abs{\delta}=\abs{\int_0^1y_\beta\circ (f+\delta)
      f_\xi\,d\xi-\int_0^1y_\alpha\circ ff_\xi\,d\xi}.
  \end{equation}
  By \eqref{eq:difgxibd}, we get that
  \begin{equation}
    \label{eq:gmfdebd}
    \norm{g-f-\delta}_{L^\infty}\leq
    \norm{f_\xi-g_\xi}_{L^1}\leq 
    C_M\norm{X_\alpha\act f-X_\beta\act g}_E.
  \end{equation}
  Then, since
  \begin{align*}
    \norm{y_\beta\circ(f+\delta)-y_\beta\circ
      g}_{L^\infty}&\leq\norm{y_{\beta\xi}}_{L^\infty}\norm{f+\delta-g}_{L^\infty}\\
    &\leq C_M\norm{X_\alpha\act f-X_\beta\act
      g}_E,
  \end{align*}
  we obtain that
  \begin{align}
    \notag
    \norm{y_\alpha\circ f-y_\beta\circ(f+\delta)}_{L^\infty}&\leq  \norm{y_\alpha\circ f-y_\beta\circ g}_{L^\infty}+ \norm{y_\beta\circ g-y_\beta\circ(f+\delta)}_{L^\infty}\\
    \label{eq:difyfdelt}
    &\leq C_M\norm{X_\alpha\act f-X_\beta\act
      g}_E.
  \end{align}
  Then, \eqref{eq:deltabd0} yields
  \begin{equation}
    \label{eq:deltabd}
    \abs{\delta}\leq C_M\norm{X_\alpha\act f-X_\beta\act
      g}_E.
  \end{equation}
  From \eqref{eq:gmfdebd} and \eqref{eq:deltabd},
  \eqref{eq:bfdifffg} and therefore
  \eqref{eq:linfcompj} follows. For any $\epsi>0$,
  we consider a sequence $\{X_n\}_{n=0}^N$ in
  $\H^M$ such that $X_0=X_\alpha$ and
  $X_N=X_\beta$ and
  $\sum_{i=1}^NJ(X_{n-1},X_n)\leq
  d_M(X_\alpha,X_\beta)+\epsi$.  We have
  \begin{align*}
    \norm{y_\alpha-y_\beta}_{L^\infty}+&\norm{U_\alpha-U_\beta}_{L^\infty}+\abs{h_\alpha-h_\beta}\\
    &\leq
    \sum_{n=1}^{N}\norm{y_{n-1}-y_n}_{L^\infty}+\norm{U_{n-1}-U_n}_{L^\infty}+\abs{h_{n-1}-h_n}\\
    &\leq C_M\sum_{n=1}^{N}J(X_{n-1},X_n)\\
    &\leq C_M(d_M(X_\alpha,X_\beta)+\epsi).
  \end{align*}
  Since $\epsi$ is arbitrary, we get
  \eqref{eq:LinfbdJ}.
\end{proof}

From the definition of $d$, we obtain that
\begin{equation}
  \label{eq:dequiv}
  d(X_\alpha,X_\beta)\leq\norm{X_\alpha-X_\beta}_E,
\end{equation}
so that the metric $d$ is weaker than the
$E$-norm.

\begin{lemma} \label{lemma:distance} The mapping
  $d_M:\H^M\times\H^M\to\Real_+$ is a metric on
  $\H^M$.
\end{lemma}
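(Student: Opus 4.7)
The four axioms to verify are non-negativity, symmetry, the triangle inequality, and the fact that $d_M(X_\alpha,X_\beta)=0$ forces $X_\alpha=X_\beta$. Non-negativity is immediate from the definition of $d_M$ as an infimum of sums of non-negative quantities. Symmetry follows at the level of $J$ from $\|X_\alpha\act f - X_\beta\act g\|_E = \|X_\beta\act g - X_\alpha\act f\|_E$, and then propagates to $d_M$ by reversing any candidate sequence: if $\{X_n\}_{n=0}^N\subset\H^M$ joins $X_\alpha$ to $X_\beta$, then $\{X_{N-n}\}_{n=0}^N$ joins $X_\beta$ to $X_\alpha$ with the same total cost.

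For the triangle inequality I would use concatenation. Given $\epsi>0$ and sequences $\{X_n\}_{n=0}^N$ from $X_\alpha$ to $X_\beta$ and $\{Y_n\}_{n=0}^M$ from $X_\beta$ to $X_\gamma$, each within $\epsi$ of the infimum, the concatenated sequence $X_0,\dots,X_N=Y_0,\dots,Y_M$ stays in $\H^M$ and realizes
$$d_M(X_\alpha,X_\gamma)\leq \sum_{n=1}^{N}J(X_{n-1},X_n)+\sum_{n=1}^{M}J(Y_{n-1},Y_n)\leq d_M(X_\alpha,X_\beta)+d_M(X_\beta,X_\gamma)+2\epsi,$$
and letting $\epsi\to 0$ gives the inequality.

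The main obstacle, and the only step that requires nontrivial input, is showing that $d_M(X_\alpha,X_\beta)=0$ implies $X_\alpha=X_\beta$. Here I would invoke Lemma \ref{lem:LinfbdJ}, which gives
$$\norm{y_\alpha-y_\beta}_{L^\infty}+\norm{U_\alpha-U_\beta}_{L^\infty}+\abs{h_\alpha-h_\beta}\leq C_Md_M(X_\alpha,X_\beta)=0.$$
Hence $y_\alpha=y_\beta$, $U_\alpha=U_\beta$, and $h_\alpha=h_\beta$. To conclude that $\nu_\alpha=\nu_\beta$ I exploit that both $X_\alpha$ and $X_\beta$ lie in $\H^M\subset\F_0$, so that by \eqref{eq:defF0} we have $y_{\alpha\xi}+\nu_\alpha=1+h_\alpha$ and $y_{\beta\xi}+\nu_\beta=1+h_\beta$ almost everywhere. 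Differentiating the identity $y_\alpha=y_\beta$ in $\xi$ yields $y_{\alpha\xi}=y_{\beta\xi}$ almost everywhere, and combined with $h_\alpha=h_\beta$ this forces $\nu_\alpha=\nu_\beta$. Thus $X_\alpha=X_\beta$, completing the verification that $d_M$ is a genuine metric on $\H^M$.
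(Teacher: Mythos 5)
Your proposal is correct and follows essentially the same route as the paper: symmetry and the triangle inequality come directly from the symmetry of $J$ and the sequence-infimum construction of $d_M$, while definiteness uses Lemma \ref{lem:LinfbdJ} to get $y_\alpha=y_\beta$, $U_\alpha=U_\beta$, $h_\alpha=h_\beta$, and then the constraint $y_\xi+\nu=1+h$ defining $\F_0$ to recover $\nu_\alpha=\nu_\beta$. You merely spell out the concatenation and sequence-reversal details that the paper leaves implicit.
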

\begin{proof}
  The symmetry is embedded in the definition of
  $J$ while the construction of $d_M$ from $J$ takes
  care of the triangle inequality. From Lemma
  \ref{lem:LinfbdJ}, we get that
  $d_M(X_\alpha,X_\beta)=0$ implies that
  $y_\alpha=y_\beta$, $U_\alpha=U_\beta$ and
  $h_\alpha=h_\beta$. Then, the definition
  \eqref{eq:defF0} of $\F_0$ implies that
  $\nu_\alpha=\nu_\beta$.
\end{proof}

\begin{remark}
  \label{rem:compmetric}
  In \cite{HolRay:07a}, a metric on $\H$ is
  obtained simply by taking the norm of $E$. The
  authors prove that the semigroup is continuous
  with respect to this norm, that is, given a
  sequence $X_n$ and $X$ in $\H$ such that
  $\lim_{n\to\infty} \norm{X_n-X}_E$, we have
  $\lim_{n\to\infty}\norm{\bar S_tX_n-\bar
    S_tX}_E=0$. However, $\bar S_t$ is not
  Lipschitz in this norm. From \eqref{eq:dequiv},
  we see that the distance introduced in
  \cite{HolRay:07a} is stronger than the one
  introduced here. (The definition of $E$ in
  \cite{HolRay:07a} differs slightly from the one
  employed here, but the statements in this remark
  remain valid).
\end{remark}

We can now prove the Lipschitz stability theorem
for $\bar S_t$.
\begin{theorem}
  \label{th:stab} Given $T>0$ and $M>0$, there
  exists a constant $C_M$ which depends only on $M$
  and $T$ such that, for any
  $X_\alpha,X_\beta\in\H^M$ and $t\in[0,T]$, we
  have
  \begin{equation}
    \label{eq:stab}
    d_M(\bar S_tX_\alpha,\bar S_tX_\beta)\leq C_Md_M(X_\alpha,X_\beta).
  \end{equation}
\end{theorem}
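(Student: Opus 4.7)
The plan is to establish the pointwise estimate $J(S_tX_\alpha, S_tX_\beta) \leq C_{\bar M}\, J(X_\alpha, X_\beta)$ for all $X_\alpha, X_\beta \in \H^M$ and then to lift it through the chain definition of $d_M$. Three ingredients combine: the $E$-stability of $S_t$ on the bounded balls $B_M$ from Theorem \ref{th:global}, the equivariance of $S_t$ under relabeling from Lemma \ref{lem:equivPi}, and the relabeling invariance \eqref{eq:invralJ} of $J$.

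First I would verify that $B_M$ is itself invariant under the action of $\Gr$. For $X = (y,U,\nu) \in B_M$ and $f \in \Gr$, a change of variables $\eta = f(\xi)$ together with the periodicity of $U$, $\nu$, and $y_\xi$ gives $\norm{U \circ f}_{L^\infty} = \norm{U}_{L^\infty}$, $\norm{(U \circ f)_\xi}_{L^1} = \norm{U_\xi}_{L^1}$, $\norm{(y \circ f)_\xi}_{L^1} = \norm{y_\xi}_{L^1}$, and $\norm{\nu \circ f\,f_\xi}_{L^1} = \norm{\nu}_{L^1}$; hence $X \act f \in B_M$. Combined with \eqref{eq:incFMBM}, this shows that whenever $X_\alpha, X_\beta \in \H^M$ and $f, g \in \Gr$, the relabeled states $X_\alpha \act f, X_\beta \act g$ remain inside $B_{\bar M}$ for $\bar M = 6(1+M)$.

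Now, for arbitrary $f, g \in \Gr$, the equivariance identity $S_t(X \act f) = (S_tX) \act f$ together with the stability estimate \eqref{eq:stabSt} applied on $B_{\bar M}$ yields
\begin{equation*}
\norm{S_tX_\alpha \act f - S_tX_\beta \act g}_E = \norm{S_t(X_\alpha \act f) - S_t(X_\beta \act g)}_E \leq C_{\bar M}\norm{X_\alpha \act f - X_\beta \act g}_E.
\end{equation*}
Taking the infimum over $f, g \in \Gr$ on both sides produces $J(S_tX_\alpha, S_tX_\beta) \leq C_{\bar M}\, J(X_\alpha, X_\beta)$. Since $\Pi(X)$ equals $X \act (\tau_a \circ f_X^{-1})$ for a relabeling element of $\Gr$, the relabeling invariance \eqref{eq:invralJ} immediately upgrades this to $J(\bar S_tX_\alpha, \bar S_tX_\beta) \leq C_{\bar M}\, J(X_\alpha, X_\beta)$.

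To pass from $J$ to $d_M$, fix $\epsi > 0$ and select a chain $\{X_n\}_{n=0}^N \subset \H^M$ with $X_0 = X_\alpha$, $X_N = X_\beta$, and $\sum_{n=1}^N J(X_{n-1}, X_n) \leq d_M(X_\alpha, X_\beta) + \epsi$. Conservation of the total energy \eqref{eq:presen} by $S_t$, combined with relabeling invariance of $\norm{\nu}_{L^1}$, ensures $\bar S_tX_n \in \H^M$ for every $n$, so $\{\bar S_tX_n\}$ is an admissible chain in the definition of $d_M(\bar S_tX_\alpha, \bar S_tX_\beta)$. Applying the termwise bound and letting $\epsi \to 0$ gives \eqref{eq:stab}. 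The genuine obstacle is conceptual rather than computational: the Lipschitz constant of $S_t$ depends on the $E$-ball containing the data, while $J$ infimizes over relabelings that can move a point anywhere within its orbit; the invariance of $B_M$ under the group action is precisely the mechanism that provides a single constant $C_{\bar M}$ valid uniformly in $f$ and $g$, which is exactly what allows the infimum to be interchanged with the stability bound.
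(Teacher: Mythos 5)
Your proposal is correct and follows essentially the same route as the paper: relabeling invariance of $B_{\bar M}$ and of $J$, equivariance of $S_t$, the $E$-stability estimate \eqref{eq:stabSt} on $B_{\bar M}$ with $\bar M=6(1+M)$, preservation of $\H^M$ by $\bar S_t$, and the chain definition of $d_M$ are combined in the same way. The only organizational difference is that you first extract the termwise bound $J(\bar S_t X_\alpha,\bar S_t X_\beta)\le C_{\bar M}\,J(X_\alpha,X_\beta)$ by passing the infimum over relabelings through the uniform stability estimate, whereas the paper fixes near-optimal relabelings $f_n,g_n$ along the chain and works with the flowed relabeled states $\bar X_n^t$, $\tilde X_n^t$ directly; the substance of the two arguments is identical.
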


\begin{proof}
  By the definition of $d_M$, for any $\epsi>0$,
  there exists a sequences $\{X_n\}_{n=0}^N$ in
  $\H^M$ and functions $\{f_n\}_{n=1}^{N-1}$,
  $\{g_n\}_{n=1}^{N-1}$ in $\Gr$ such that
  $X_0=X_\alpha$, $X_N=X_\beta$ and
  \begin{equation}
    \label{eq:sumXnm1}
    \sum_{i=1}^N\norm{X_{n-1}\act f_{n-1}-X_{n}\act g_{n-1}}_E\leq
    d_M(X_\alpha,X_\beta)+\epsi.
  \end{equation}
  Since $\H^M\subset B_{\bar M}$ for $\bar
  M=6(1+M)$, see \eqref{eq:incFMBM}, and $B_{\bar
    M}$ is preserved by relabeling, we have that
  $X_{n}\act f_n$ and $X_{n}\act g_{n-1}$ belong to
  $B_{\bar M}$. From the Lipschitz stability
  result given in \eqref{eq:stabSt}, we obtain
  that
  \begin{equation}
    \label{eq:normSxnm1}
    \norm{S_t(X_{n-1}\act f_{n-1})-S_t(X_{n}\act g_{n-1})}_E\leq C_M\norm{X_{n-1}\act f_{n-1}-X_{n}\act g_{n-1}}_E,
  \end{equation}
  where the constant $C_M$ depends only on $M$ and
  $T$. Introduce
  \begin{equation*}
    \bar X_n=X_n\act f_n,\  \bar X_n^t=S_t(\bar X_n), \text{ for }n=0,\ldots,N-1,
  \end{equation*}
  and
  \begin{equation*}
    \tilde X_n=X_{n}\act g_{n-1},\ \tilde X_n^t=S_t(\tilde X_n), \text{ for }n=1,\ldots,N.
  \end{equation*}
  Then \eqref{eq:sumXnm1} rewrites as 
  \begin{equation}
    \label{eq:sumXnm1b}
    \sum_{i=1}^N\norm{\bar X_{n-1}-\tilde X_{n}}_E\leq
    d_M(X_\alpha,X_\beta)+\epsi
  \end{equation}
  while \eqref{eq:normSxnm1} rewrites as
  \begin{equation}
    \label{eq:normSnnot}
    \norm{\bar X_{n-1}^t-\tilde X_{n}^t}_E\leq C_M\norm{\bar X_{n-1}-\tilde X_{n}}_E.
  \end{equation}
  We have
  \begin{equation*}
    \Pi(\bar X_0^t)=\Pi\circ S_t(X_0\act f_0)=\Pi\circ (S_t(X_0)\act f_0)=\Pi\circ S_t(X_0)=\bar S_t(X_\alpha)
  \end{equation*}
  and similarly $\Pi(\tilde X_N^t)=\bar
  S_t(X_\beta)$. We consider the sequence in
  $\H^M$ which consists of $\{\Pi \bar
  X_n^t\}_{n=0}^{N-1}$ and $\bar
  S_t(X_\beta)$. The set $\F^M$ is preserved by
  the flow and by relabeling. Therefore, $\{\Pi
  \bar X_n^t\}_{n=0}^{N-1}$ and $\bar
  S_t(X_\beta)$ belong to $\H^M$. The endpoints
  are $\bar S_t(X_\alpha)$ and $\bar
  S_t(X_\beta)$. From the definition of the metric
  $d_M$, we get
  \begin{align}
    \notag d_M(\bar S_t(X_\alpha),\bar
    S_t(X_\beta))&\leq\sum_{n=1}^{N-1} \left(J(\Pi
      \bar X_{n-1}^t,\Pi\bar X_n^t)\right)+J(\Pi
    \bar
    X_{N-1}^t,\bar S_t(X_\beta))\\
    \label{eq:dtilS}
    &=\sum_{n=1}^{N-1} \left(J(\bar X_{n-1}^t,\bar
      X_n^t)\right)+J(\bar X_{N-1}^t,\tilde
    X_N^t))&\text{ by \eqref{eq:invralJ}.}
  \end{align}
  By using the equivariance of $S_t$, we obtain
  that
  \begin{equation}
    \label{eq:tilXrelbarX}
    \begin{aligned}
      \tilde X_n^t&=S_t(\tilde X_n)=S_t((\bar X_n\act f_n^{-1})\act g_{n-1})\\
      &=S_t(\bar X_n)\act (f_n^{-1}\circ g_{n-1})=\bar X_n^t\act (f_n^{-1}\circ g_{n-1}).
    \end{aligned}
  \end{equation}
  Hence, by using \eqref{eq:invralJ}, that is, the
  invariance of $J$ with respect to relabeling,
  we get from \eqref{eq:dtilS} that
  \begin{align*}
    d_M(\bar S_t(X_\alpha),\bar
    S_t(X_\beta))&\leq\sum_{n=1}^{N-1}
    \left(J(\bar X_{n-1}^t,\tilde
      X_n^t)\right)+J(\bar
    X_{N-1}^t,\tilde X_N^t)\\
    &\leq \sum_{n=1}^N\norm{\bar X_{n-1}^t-\tilde
      X_n^t}_E &\text{ by \eqref{eq:dequiv} }\\
    &\leq C_M\sum_{n=1}^N\norm{\bar X_{n-1}-\tilde
      X_n}_E &\text{  by \eqref{eq:normSnnot}}\\
    &\leq C_M(d_M(X_\alpha,X_\beta)+\epsi).
  \end{align*}
  After letting $\epsi$ tend to zero, we obtain
  \eqref{eq:stab}.
\end{proof}

\section{From Lagrangian to Eulerian coordinates}

We now introduce a second set of coordinates, the
so--called Eulerian coordinates.  Therefore let us
first consider $X=(y,U,\nu)\in \F$. We can define
the Eulerian coordinates as in \cite{HolRay:07a}
and also obtain the same mappings between Eulerian
and Lagrangian coordinates. For completeness we
will state the results here.

\begin{definition}  \label{def:D}
  The set $\D$ consists of all pairs $(u,\mu)$ such that 
  \begin{enumerate}
  \item 
    $u\in H^1_{\rm per}$, and   
  \item 
    $\mu$ is a positive Radon measure whose absolute continuous part, $\mu_{\rm ac}$, satisfies 
    \begin{equation}
      \mu_{\rm ac}=(u^2+u_x^2)dx.
    \end{equation}
  \end{enumerate}
\end{definition}

We can define a mapping, denoted by $L$, from $\D$
to $\H\subset \F$:
\begin{definition}\label{defL}
  For any $(u,\mu)$ in $\D$, let
  \begin{equation}\label{def:EtoF}
    \begin{aligned} 
      h& = \mu([0,1)), \\
      y(\xi)& = \sup \{y \mid F_\mu(y)+y<(1+h)\xi\},\\
      \nu(\xi)& = (1+h)-y_\xi(\xi),\\
      U(\xi)& =u\circ y(\xi),
    \end{aligned}
  \end{equation}
  where 
  \begin{equation}
    F_\mu(x)=\left\{
      \begin{aligned}
        \mu([\,0,x)) \quad & \text{ if } x>0,\\
        0 \quad & \text{ if }x=0,\\
        -\mu([\,x,0)) \quad & \text{ if } x<0.
      \end{aligned}
    \right .
  \end{equation}
  Then $(y,U,\nu)\in \F_0$. We define
  $L(u,\mu)=\Pi(y,U,\nu)$.
\end{definition}

Thus from any initial data $(u_0,\mu_0)\in \D$, we can construct a solution of \eqref{eq:sys} in $\F$ with initial data $X_0=L(u_0,\mu_0)\in\F$. It remains to go back to the original variables, which is the purpose of the mapping $M$, defined as follows.

\begin{definition}\label{def:FtoE}
  For any $X\in\F$, then $(u,\mu)$ given by
  \begin{equation}
    \label{eq:defL}
    \begin{aligned}
      u(x)=U(\xi) & \text{ for any }   \xi \text{ such that } x=y(\xi),\\
      \mu& =y_\# (\nu d\xi),
    \end{aligned}
  \end{equation}
  belongs to $\D$. We denote by $M$ the mapping
  from $\F$ to $\D$ which for any $X\in\F$
  associates the element $(u,\mu)\in\D$ given by
  \eqref{eq:defL}. 
\end{definition}
The mapping $M$ satisfies
\begin{equation}
  \label{eq:propM}
  M=M\circ \Pi.
\end{equation}
The inverse of $L$ is the restriction of $M$ to
$\H$, that is,
\begin{equation}
  \label{eq:inversma}
  L\circ M=\Pi, \quad \text{ and } \quad M\circ L=\id.
\end{equation}

\begin{figure}[h]
  \centering
  \includegraphics[width=10cm]{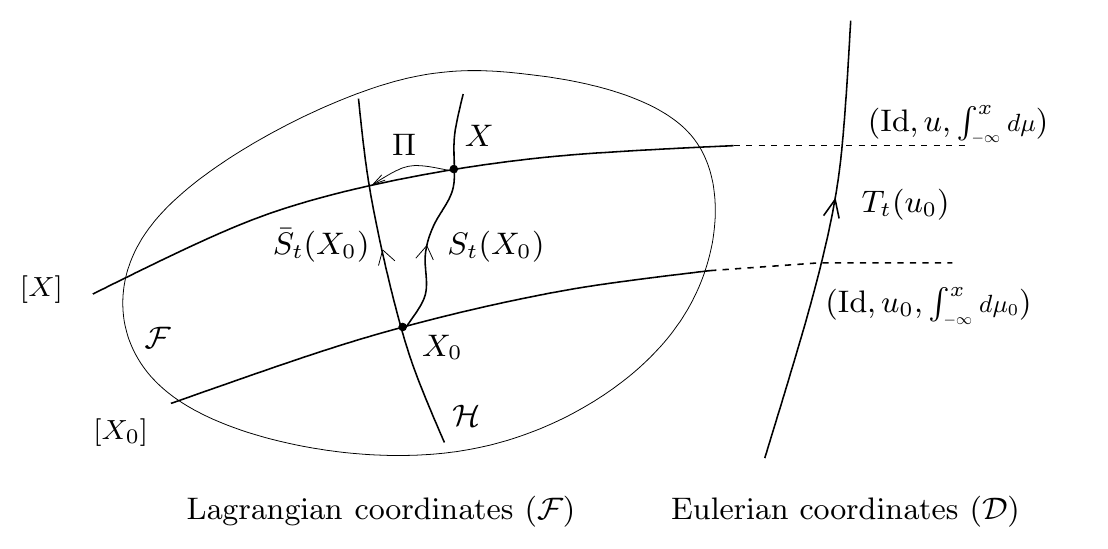}
  \caption{A schematic illustration of  the construction
    of the semigroup. The set $\F$ where the
    Lagrangian variables are defined is
    represented by the interior of the closed
    domain on the left. The equivalence classes $[X]$ and
    $[X_0]$ (with respect to the action of the
    relabeling group $G$) of $X$ and $X_0$,
    respectively, are represented by 
    horizontal curves. To each equivalence class
    there corresponds a unique element in $\H$ and
    $\D$ (the set of Eulerian variables). The sets
    $\H$ and $\D$ are represented by the
    vertical curves.}
  \label{fig:explfig}
\end{figure}

Next we show that we indeed have obtained a solution of the CH equation. 
By a weak solution of the Camassa--Holm equation
we mean the following.
\begin{definition}
  Let $u\colon\Real_+\times\Real \rightarrow \Real$. Assume that $u$ 
  satisfies \\
  (i) $u\in L^\infty([0,\infty), H^1_{\rm per})$, \\
  (ii)  the equations
  \begin{multline}\label{weak1}
    \iint_{\Real_+\times \Real}-u(t,x)\phi_t(t,x)+(u(t,x)u_x(t,x)+P_x(t,x))\phi(t,x)dxdt \\
    = \int_\Real u(0,x)\phi(0,x)dx,
  \end{multline}
  and
  \begin{equation}\label{weak2}
    \iint_{\Real_+\times \Real}(P(t,x)-u^2(t,x)-\frac{1}{2} u_x^2(t,x))\phi(t,x)+P_x(t,x)\phi_x(t,x)dxdt=0,
  \end{equation}
  hold for all $\phi\in C_0^\infty
  ([0,\infty),\Real)$. Then we say that $u$ is a weak
  global solution of the Camassa--Holm equation.
\end{definition}

\begin{theorem}
  Given any initial condition
  $(u_0,\mu_0)\in\D$, we denote
  $(u,\mu)(t)=T_t(u_0,\mu_0)$. Then $u(t,x)$ is a
  weak global solution of the Camassa--Holm
  equation.
\end{theorem}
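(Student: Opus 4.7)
The plan is to push the identities of the Lagrangian system \eqref{eq:sys} forward to Eulerian coordinates via the map $M$. Set $X_0=L(u_0,\mu_0)\in\H$, let $X(t)=(y(t),U(t),\nu(t))=S_t(X_0)$, which exists globally by Theorem \ref{th:global}, and set $(u(t),\mu(t))=M(X(t))$; this coincides with $T_t(u_0,\mu_0)$ by \eqref{eq:propM} and \eqref{eq:inversma}. Since $\|\nu(t)\|_{L^1}=h$ is conserved (see \eqref{eq:presen}) and $\mu_{\rm ac}=(u^2+u_x^2)\dx$ with $\mu([0,1))=h$, a change-of-variables argument gives $\int_0^1(u^2+u_x^2)\dx\le h$, so $u\in L^\infty([0,\infty),H^1_{\rm per})$, which is property (i).

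For the first weak identity \eqref{weak1}, I would change variables $x=y(t,\xi)$ and use $u(t,y(t,\xi))=U(t,\xi)$ to rewrite the Eulerian double integral in the form $\iint -U(t,\xi)\,\phi_t(t,y(t,\xi))\,y_\xi(t,\xi)\,d\xi\,dt$ plus the convective and $P_x$ terms. Integration by parts in $t$, using $y_t=U$, $U_t=-Q$, and $(y_\xi)_t=U_\xi$ together with the identification $Q(t,\xi)=(P_x\circ y)(t,\xi)$ that follows from \eqref{eq:pxy1} and \eqref{eq:Q} when $y$ is nondecreasing, reduces the weak equation to the pointwise identity $U_t+Q=0$, i.e.~the second line of \eqref{eq:equivsys}. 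For \eqref{weak2}, the distributional identity $P-P_{xx}=u^2+\tfrac12 u_x^2$ is straightforward from the convolution representation \eqref{eq:peq}; what must still be checked is that the Lagrangian formula \eqref{eq:P} produces exactly this $P$ on the Eulerian side. This again reduces to pushing $\nu\,d\eta$ forward via $y$: on $\{y_\xi>0\}$ we have $\nu=(u^2+u_x^2)\circ y\cdot y_\xi$ and $U^2y_\xi=(u^2\circ y)y_\xi$, so $(U^2y_\xi+\nu)\,d\eta$ pushes forward to $(2u^2+u_x^2)\dx$ plus the singular part of $\mu$, giving the correct right-hand side of \eqref{eq:peq} after the kernel $\tfrac12 e^{-|x-z|}$ is inserted.

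The principal obstacle is the set $\Omega(t)=\{\xi\in[0,1)\mid y_\xi(t,\xi)=0\}$, on which the change of variables degenerates and concentrated energy lives. From \eqref{eq:lagcoord3}, $y_\xi\nu=y_\xi^2U^2+U_\xi^2$ forces $U_\xi=0$ on $\Omega(t)$, so $U$ is constant on each connected component and the prescription $u(x)=U(\xi)$ for $x=y(\xi)$ in Definition \ref{def:FtoE} is unambiguous; moreover $y(t,\Omega(t))$ has Lebesgue measure zero, so the absolutely continuous part of $\mu$ and the Eulerian integrand $u^2+u_x^2$ are unaffected by $\Omega(t)$. The contribution of $\Omega(t)$ enters only through the singular part of $\mu$ in the nonlocal term $P$, and the calculation sketched above shows that \eqref{eq:P} correctly encodes this singular part. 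Once \eqref{weak1} and \eqref{weak2} are established for a dense set of test functions via these manipulations, a standard approximation argument extends them to all $\phi\in C_0^\infty([0,\infty),\Real)$, and passage to the initial data is handled using the continuity of $t\mapsto(u(t),\mu(t))$ established through the Lipschitz estimate of Theorem \ref{th:stab}.
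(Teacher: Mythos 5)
Your overall route is the same as the paper's: transfer the weak-form integrals to Lagrangian coordinates via $x=y(t,\xi)$, integrate by parts in $t$ and $\xi$, and use the system \eqref{eq:equivsys}--\eqref{eq:equivsysdev} together with $Q=P_x\circ y$, $P_\xi=Qy_\xi$, $Q_\xi=-\tfrac12\nu-(\tfrac12U^2-P)y_\xi$. The treatment of \eqref{weak1} and of property (i) is fine in outline (for (i) the inequality $\int_0^1(u^2+u_x^2)\,dx\le h$ indeed suffices, and in \eqref{weak1} the concentration set is harmless because $U_\xi=0$ wherever $y_\xi=0$, by \eqref{eq:lagcoord3}).

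The gap is in your handling of \eqref{weak2} on the concentration set $\Omega(t)=\{y_\xi(t,\cdot)=0\}$. You assert that $(U^2y_\xi+\nu)\,d\eta$ pushes forward to $(2u^2+u_x^2)\,dx$ plus the singular part of $\mu$ and that this gives ``the correct right-hand side of \eqref{eq:peq}'' --- but \eqref{eq:peq}, and likewise the source term $u^2+\tfrac12u_x^2$ in \eqref{weak2}, contain no singular measure. At a time $t$ where $\mu(t)$ has a nonzero singular part, pushing forward $\tfrac12\nu+(\tfrac12U^2-P)y_\xi$ (which is what $-Q_\xi$ produces after the integration by parts in $\xi$) does \emph{not} yield $u^2+\tfrac12u_x^2$; it yields that density plus a singular contribution, so the pointwise-in-time identity behind \eqref{weak2} genuinely fails at such times, and no consistency of \eqref{eq:P} with the singular part can rescue it. What is actually needed --- and what the paper uses --- is the fact that for almost every $t\in\Real_+$ the set $\{\xi\in[0,1]\mid y_\xi(t,\xi)>0\}$ has full measure (a consequence of $y_{\xi t}=U_\xi$, $U_{\xi t}=\tfrac12\nu+(\tfrac12U^2-P)y_\xi$ with $\nu\ge c>0$ where $y_\xi=0$, plus Fubini, as in \cite{HolRay:07a}), so that the singular part vanishes for a.e.\ $t$, $\int_0^1(u^2+u_x^2)\,dx=\int_0^1\nu\,d\xi=h$ for a.e.\ $t$, and the change of variables $\nu\,d\xi\mapsto(u^2+u_x^2)\,dx$ is legitimate for a.e.\ $t$; the exceptional times form a null set and hence do not affect the space-time integrals in \eqref{weak2}. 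Your argument neither states nor proves this a.e.-in-time statement, and as written the claim it substitutes for it is false at wave-breaking times.
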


\begin{proof}
  After making the change of variables
  $x=y(t,\xi)$ we get on the one hand
  \begin{align}\nn
    -\iint_{\Real_+\times\Real} & u(t,x)\phi_t(t,x)dxdt
    = - \iint_{\Real_+\times
      \Real}u(t,y(t,\xi))\phi_t(t,y(t,\xi))y_\xi(t,\xi)d\xi
    dt\\\nn & =- \iint_{\Real_+\times \Real}
    U(t,\xi)[(\phi(t,y(t,\xi))_t-\phi_x(t,y(t,\xi)))y_t(y,\xi)]y_\xi(t,\xi)d\xi
    dt\\\nn
    & = -\iint_{\Real_+\times \Real}[U(t,\xi)y_\xi(t,\xi)(\phi(t,y(t,\xi)))_t-\phi_\xi(t,y(t,\xi))U(t,\xi)^2]d\xi dt\\
    & = \int_\Real
    U(0,\xi)\phi(0,y(0,\xi))y_\xi(0,\xi)d\xi\\\nn
    & \quad + \iint_{\Real_+\times \Real}
    [U_t(t,\xi)y_\xi(t,\xi)+U(t,\xi) y_{\xi
      t}(t,\xi)]\phi(t,y(t,\xi))d\xi dt \\\nn &
    \quad +\iint_{\Real_+\times \Real}
    U^2(t,\xi)\phi_\xi(t,y(t,\xi))d\xi dt\\\nn & =
    \int_\Real u(0,x)\phi(0,x)dx\\\nn & \quad
    -\iint_{\Real_+\times \Real}
    (Q(t,\xi)y_\xi(t,\xi)+U_\xi(t,\xi)U(t,\xi))\phi(t,y(t,\xi))d\xi
    dt,
  \end{align}
  while on the other hand
  \begin{align}\nn
    \iint_{\Real_+\times \Real} & (u(t,x)u_x(t,x)+P_x(t,x))\phi(t,x)dxdt\\
    & = \iint_{\Real_+\times
      \Real}(U(t,\xi)U_\xi(t,\xi)+P_x(t,y(t,\xi))y_\xi(t,\xi))\phi(t,y(t,\xi))d\xi
    dt\\\nn & = \iint_{\Real_+\times \Real}
    (U(t,\xi)U_\xi(t,\xi)+Q(t,\xi)y_\xi(t,\xi))\phi(t,y(t,\xi)) d\xi dt,
  \end{align}
  which shows that \eqref{weak1} is fulfilled. Equation
  \eqref{weak2} can be shown analogously
  \begin{align}\nn
    \iint_{\Real_+\times \Real}&
    P_x(t,x)\phi_x(t,x)dxdt\\\nn
    & = \iint_{\Real_+\times \Real} Q(t,\xi)y_\xi(t,\xi)\phi_x(t, y(t,\xi))d\xi dt\\
    &= \iint_{\Real_+\times \Real}
    Q(t,\xi)\phi_\xi(t,y(t,\xi)) d\xi dt\\\nn & =
    -\iint_{\Real_+\times
      \Real}Q_\xi(t,\xi)\phi(t,y(t,\xi)) d\xi dt\\\nn
    & = \iint_{\Real_+\times \Real} [\frac{1}{2}
    \nu(t,\xi)+(\frac{1}{2}U^2(t,\xi)-P(t,\xi))y_\xi(t,\xi)]\phi(t,y(t,\xi))d\xi
    dt\\\nn &= \iint_{\Real_+\times \Real}[\frac{1}{2}
    u_x^2(t,x)+u^2(t,x)-P(t,x)]\phi(t,x) dx dt.
  \end{align}
  In the last step we used the following
  \begin{align}
    \int_0^1 u^2+u_x^2 dx & =\int_{y(0)}^{y(0)+1} u^2+u_x^2 dx =\int_{y(0)}^{y(1)} u^2+u_x^2 dx \\
    & = \int_{\{\xi\in[0,1] \mid y_\xi (t,\xi)>0\}} U^2y_\xi +\frac{U_\xi^2}{y_\xi}d\xi=\int_0^1 \nu dx,
  \end{align}
 the last equality holds only for almost
    all $t$ because  for almost every
  $t\in \Real_+$ the set $\{\xi\in[0,1] \mid y_\xi
  (t,\xi)>0\}$ is of full measure and therefore
  \begin{equation}
    \label{eq:preseuen}
    \int_0^1 (u^2+ u_x^2) dx=\int_0^1 \nu d\xi=h,
  \end{equation}
  which is bounded by a constant for all times.
  Thus we proved that $u$ is a weak solution of
  the Camassa--Holm equation. 
\end{proof}

Next we return to the construction of the Lipschitz metric on $\D$.
\begin{definition}
  Let
  \begin{equation}
    T_t:=M\bar S_tL \colon \D\rightarrow \D.
  \end{equation}
\end{definition}
Note that, by the definition of $\bar S_t$ and
\eqref{eq:propM}, we also have that
\begin{equation*}
  T_t=M S_tL.
\end{equation*}
Next we show that $T_t$ is a Lipschitz continuous
semigroup by introducing a metric on $\D$. Using
the bijection $L$ transport the topology from $\H$
to $\D$.

\begin{definition} \label{def:dD}
  We define the metric $d_{\D}\colon \D\times \D
  \rightarrow [0,\infty)$ by
  \begin{equation}
    d_{\D}((u,\mu),(\tilde{u},\tilde{\mu}))=d(L(u,\mu),L(\tilde{u},\tilde{\mu})).
  \end{equation}
\end{definition}
The Lipschitz stability of the semigroup $T_t$
follows then naturally from Theorem~\ref{th:stab}. The stability holds on sets of bounded energy
that we now introduce in the following definition. 
\begin{definition}
  Given $M>0$, we define the subsets $\D^M$ of
  $\D$, which corresponds to sets of bounded
  energy, as
  \begin{equation}
    \label{eq:defbdenerD}
    \D^M=\{(u,\mu)\in\D\ | \ \mu([0,1))\leq M\}.
  \end{equation}
  On the set $\D^M$, we define the metric
  $d_{\D^M}$ as
  \begin{equation}
    \label{eq:defdDM}
    d_{\D^M}((u,\mu),(\tilde{u},\tilde{\mu}))=d_M(L(u,\mu),L(\tilde{u},\tilde{\mu}))
  \end{equation}
  where the metric $d_M$ is defined in
  \eqref{eq:defdM}.
\end{definition}
The definition \eqref{eq:defdDM} is well-posed as
we can check from the definition of $L$ that if
$(u,\mu)\in\D^M$ then $L(u,\mu)\in\H^M$. We can
now state our main theorem.
\begin{theorem}\label{th:main}
  The semigroup $(T_t,d_\D)$ is a continuous
  semigroup on $\D$ with respect to the metric
  $d_\D$. The semigroup is Lipschitz continuous on
  sets of bounded energy, that is: Given $M>0$ and
  a time interval $[0,T]$, there exists a constant
  $C$ which only depends on $M$ and $T$ such that,
  for any $(u,\mu)$ and $(\tilde u,\tilde\mu)$ in
  $\D^M$, we have
  \begin{equation*}
    d_{\D^M}(T_t(u,\mu),T_t(\tilde{u},\tilde{\mu}))\leq Cd_{\D^M}((u,\mu),(\tilde{u},\tilde{\mu}))
  \end{equation*}
  for all $t\in[0,T]$.
\end{theorem}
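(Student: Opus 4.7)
The proof transports the Lipschitz estimate of Theorem~\ref{th:stab} from $\H^M$ to $\D^M$ via the bijection $L$; the analytic work has already been done, and what remains is essentially a chase through the definitions.

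First I would check that $L$ restricts to a bijection $\D^M\to\H^M$. From Definition~\ref{defL}, the triple $(y,U,\nu)$ constructed before the projection $\Pi$ satisfies $\norm{\nu}_{L^1}=h=\mu([0,1))\leq M$. Both the group action by $\Gr$ and the translation $\tau_a$ appearing in $\Pi_2$ preserve $\norm{\nu}_{L^1}$ (the first by a change of variables, as already noted in the proof of Proposition~\ref{prop:action}, and the second trivially), so $L(u,\mu)\in\H^M$.

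Next I would establish the commutation relation
\begin{equation*}
L\circ T_t=\bar S_t\circ L \qquad\text{on }\D.
\end{equation*}
Unfolding $T_t=M\bar S_tL$ and using $L\circ M=\Pi$ from \eqref{eq:inversma}, this becomes $\Pi\circ\bar S_tL=\bar S_tL$. Since $\bar S_t=\Pi\circ S_t$ takes values in $\H$, and since $\Pi$ fixes every element of $\H$ (because $\Pi(X)$ is the unique element of $\H$ in the equivalence class of $X$, so for $X\in\H$ we must have $\Pi(X)=X$), the identity follows. The Lipschitz bound then drops out: given $(u,\mu),(\tilde u,\tilde\mu)\in\D^M$ and $t\in[0,T]$,
\begin{align*}
d_{\D^M}(T_t(u,\mu),T_t(\tilde u,\tilde\mu))
&=d_M\bigl(LT_t(u,\mu),LT_t(\tilde u,\tilde\mu)\bigr)\\
&=d_M\bigl(\bar S_tL(u,\mu),\bar S_tL(\tilde u,\tilde\mu)\bigr)\\
&\leq C\,d_M\bigl(L(u,\mu),L(\tilde u,\tilde\mu)\bigr)=C\,d_{\D^M}((u,\mu),(\tilde u,\tilde\mu))
\end{align*}
by Theorem~\ref{th:stab}, with $C$ depending only on $M$ and $T$.

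For continuity of $T_t$ on $(\D,d_\D)$: telescoping $J$ along any admissible finite sequence gives $d(X_\alpha,X_\beta)\geq|h_\alpha-h_\beta|$, so if $(u_n,\mu_n)\to(u,\mu)$ in $d_\D$ the total energies $\mu_n([0,1))$ are bounded, placing the sequence eventually inside some $\D^M$ that also contains $(u,\mu)$. On $\D^M$ one has $d_\D\leq d_{\D^M}$, since the latter is an infimum over a strictly smaller class of connecting sequences, and the Lipschitz bound above yields the required continuity. The only genuine point to verify is the commutation $L\circ T_t=\bar S_t\circ L$; once this is in hand the theorem is a direct corollary of Theorem~\ref{th:stab}.
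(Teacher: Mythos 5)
Your proof of the Lipschitz estimate is correct and is essentially the paper's own argument: you use $L\circ M=\Pi$, the fact that $\Pi$ fixes every element of $\H$ (equivalently $\Pi\circ\Pi=\Pi$), so that $L\circ T_t=\bar S_t\circ L$, you check that $L(\D^M)\subset\H^M$ (which the paper records just before the theorem), and you then invoke Theorem~\ref{th:stab}. One small omission: the statement also asserts that $T_t$ is a semigroup, which you never verify; it does, however, follow at once from your commutation relation, since $T_tT_{t'}=M\bar S_tLM\bar S_{t'}L=M\bar S_t\bar S_{t'}L=M\bar S_{t+t'}L=T_{t+t'}$, and this is exactly how the paper argues.

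The continuity paragraph, by contrast, contains a genuine gap. You correctly note $d_\D\le d_{\D^M}$ on $\D^M$ (the infimum defining $d$ runs over chains in all of $\F$, whereas $d_M$ restricts them to $\H^M$), but this inequality points the wrong way for your purpose: from $d_\D((u_n,\mu_n),(u,\mu))\to0$ it does not follow that $d_{\D^M}((u_n,\mu_n),(u,\mu))\to0$, and the latter is what your application of the Lipschitz bound needs. To close the gap you need a reverse comparison on sets of bounded energy. For instance: given a chain $\{X_n\}_{n=0}^N$ in $\F$ joining $X_\alpha,X_\beta\in\H^M$ with $\sum_{n=1}^N J(X_{n-1},X_n)\le d(X_\alpha,X_\beta)+\epsi$, the estimate $\abs{h_{n-1}-h_n}\le J(X_{n-1},X_n)$ (which holds for arbitrary elements of $\F$, as in the proof of Lemma~\ref{lem:LinfbdJ}) keeps all intermediate energies below $M'=M+d(X_\alpha,X_\beta)+\epsi$; replacing each $X_n$ by $\Pi(X_n)$, which is a relabeling and hence leaves every $J(X_{n-1},X_n)$ unchanged by \eqref{eq:invralJ} while preserving $h_n$, produces a chain in $\H^{M'}$ with the same endpoints, so that $d_{M'}(X_\alpha,X_\beta)\le d(X_\alpha,X_\beta)+\epsi$. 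Combining this with $d\le d_{M'}$ and the Lipschitz estimate of Theorem~\ref{th:stab} with constant $C_{M'}$ then yields the continuity of $T_t$ with respect to $d_\D$. (For comparison, the paper's own proof confines itself to the semigroup property and the Lipschitz estimate on $\D^M$ and leaves the continuity assertion at that.)
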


\begin{proof} First, we prove that $T_t$ is a
  semigroup. Since $\bar S_t$ is a mapping from
  $\H$ to $\H$, we have
  \begin{equation*}
    T_{t}T_{t'}=M\bar S_tLM\bar S_{t'}L=M\bar S_t\bar S_{t'}L=M\bar S_{t+t'}L=T_{t+t'}
  \end{equation*}
  where we also use \eqref{eq:inversma} and the
  semigroup property of $\bar S_t$. We now prove
  the Lipschitz continuity of $T_t$. By using
  Theorem \ref{th:stab}, we obtain that
  \begin{align*}
    d_{\D^M}(T_t(u,\mu),T_t(\tilde{u},\tilde{\mu}))
    & = d_M(LM\bar S_tL(u,\mu),
    LM\bar S_tL(\tilde{u},\tilde{\mu}))\\
    & = d_M(\bar S_tL(u,\mu),
    \bar S_tL(\tilde{u},\tilde{\mu}))\\
    &\leq C d_M(L(u,\mu),L(\tilde{u},\tilde{\mu}))\\
    &= C
    d_{\D^M}((u,\mu),(\tilde{u},\tilde{\mu})).
  \end{align*}
\end{proof}

\section{The topology on $\D$}\label{sec:topology}

\begin{proposition} \label{prop:cont1}
  The mapping
  \begin{equation}
    u\mapsto  (u,(u^2+u_x^2) dx)
  \end{equation} 
  is continuous from $H^1_{\rm per}$ into
  $\D$. In other words, given a sequence
  $u_n\in H^1_{\rm per}$ converging to $u\in H^1_{\rm per}$, 
  then $(u_n,(u_n^2+u_{nx}^2)dx)$ converges to $(u,(u^2+u_x^2)
  dx)$ in $\D$.
\end{proposition}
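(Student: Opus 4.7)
The plan is to prove the stronger statement $\norm{L(u_n,\mu_n)-L(u,\mu)}_E\to 0$; by \eqref{eq:dequiv} and Definition~\ref{def:dD} this will give $d_\D$-convergence. Write $v:=u^2+u_x^2$, $v_n:=u_n^2+u_{nx}^2$, and $h:=\norm{u}^2_{H^1_{\rm per}}$. A Cauchy--Schwarz estimate yields $\norm{v_n-v}_{L^1}\le(\norm{u_n}_{H^1}+\norm{u}_{H^1})\norm{u_n-u}_{H^1}\to 0$, and $h_n\to h$. I work with the raw triple $(y_n,U_n,\nu_n)$ from Definition~\ref{defL}: absolute continuity of $\mu_n$ ensures the raw construction satisfies $y_{n\xi}+\nu_n=1+h_n$, and since $y_n(0)=0$ the subsequent $\Pi_1$ acts trivially; thus $L(u_n,\mu_n)$ differs from the raw triple only by the shift $\Pi_2$ of parameter $a_n=\int_0^1 y_n\,d\xi\to a$, which preserves $E$-convergence.

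The routine components are $y_n\to y$ in $L^\infty$ (since $F_{\mu_n}\to F_\mu$ uniformly from $\norm{v_n-v}_{L^1}\to 0$, and $h_n\to h$) and $U_n=u_n\circ y_n\to u\circ y=U$ in $L^\infty$ (using $u_n\to u$ uniformly by the 1D Sobolev embedding, together with continuity of $u$). The hard part will be $\nu_n\to\nu$ in $L^1_{\rm per}$, which via the identity $\nu_n+y_{n\xi}=1+h_n$ also yields $y_{n\xi}\to y_\xi$ in $L^1$. My plan is to upgrade to $L^2_{\rm per}$ and apply the Hilbert-space principle that weak convergence plus convergence of norms gives strong convergence. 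The sequence $\nu_n$ is $L^2$-bounded since $\norm{\nu_n}_{\Linf}\le 1+h_n$. For weak convergence, the change of variables $x=y_n(\xi)$ combined with the identity $\nu_n=v_n\circ y_n\cdot y_{n\xi}$ gives, for any $\chi\in C_{\rm per}$,
\begin{equation*}
  \int_0^1\nu_n\chi\,d\xi=\int_0^1 v_n(x)\,\chi(y_n^{-1}(x))\,dx\longrightarrow\int_0^1 v\,\chi(y^{-1})\,dx=\int_0^1\nu\chi\,d\xi,
\end{equation*}
using $v_n\to v$ in $L^1$ and $\chi\circ y_n^{-1}\to\chi\circ y^{-1}$ uniformly; the uniform $L^\infty$ bound extends this to weak $L^2$ convergence by density. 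For the norms, the same change of variables yields
\begin{equation*}
  \norm{\nu_n}_{L^2}^2=(1+h_n)\int_0^1\frac{v_n^2}{1+v_n}\,dx=(1+h_n)\Bigl[h_n-\int_0^1\phi(v_n)\,dx\Bigr]
\end{equation*}
with $\phi(r)=r/(1+r)$; since $v_n\to v$ in measure (from $L^1$ convergence on a finite interval) and $\phi$ is bounded continuous, Vitali's theorem gives $\int\phi(v_n)\,dx\to\int\phi(v)\,dx$, hence $\norm{\nu_n}_{L^2}\to\norm{\nu}_{L^2}$. Strong $L^2$ convergence follows, and hence $L^1$ convergence on $[0,1]$.

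A fully analogous $L^2$ argument handles $U_{n\xi}=u_{nx}(y_n)y_{n\xi}\to U_\xi$ in $L^1$: weak $L^2$ convergence comes from $u_{nx}\to u_x$ in $L^2$ after the same change of variables, and
\begin{equation*}
  \norm{U_{n\xi}}_{L^2}^2=(1+h_n)\int_0^1\frac{u_{nx}^2}{1+v_n}\,dx\longrightarrow(1+h)\int_0^1\frac{u_x^2}{1+v}\,dx=\norm{U_\xi}_{L^2}^2,
\end{equation*}
where the integral convergence uses the decomposition $u_{nx}^2/(1+v_n)=\phi(v_n)-u_n^2/(1+v_n)$ together with $L^1$ convergence of each summand (from $u_n^2\to u^2$ uniformly and $1/(1+v_n)\to 1/(1+v)$ in $L^1$ by Vitali). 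The central obstacle throughout is exactly this $L^2$ upgrade: weak-$\ast$ $L^\infty$ convergence combined with $L^1$-norm convergence does not in general imply $L^1$ strong convergence (oscillation counterexamples exist), and it is the pointwise identity $\phi(v_n)(1+v_n)=v_n$ that makes the change-of-variables computation for $\norm{\nu_n}_{L^2}^2$ work and renders the Hilbert-space argument applicable.
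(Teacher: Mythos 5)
Your proposal is correct, and its overall architecture coincides with the paper's: reduce to showing that the raw triples $(y_n,U_n,\nu_n)$ of Definition~\ref{defL} converge in $E$ (observing that for absolutely continuous measures $\Pi_1$ acts trivially, so only the shift $\Pi_2$ with $a_n\to a$ remains), establish $y_n\to y$ and $U_n\to U$ in $L^\infty$, and treat the derivative components by the Hilbert-space principle that weak $L^2$ convergence plus convergence of $L^2$ norms yields strong convergence, finishing with \eqref{eq:dequiv}. Where you genuinely depart from the paper is in the two key technical steps. For $\nu_n\to\nu$ in $L^1$ the paper argues directly in $L^1$: it uses $y_{n\xi}=(1+h_n)/(1+g_n\circ y_n)$, the decomposition \eqref{approx:zeta1}, and an $\varepsilon/3$ approximation of $g$ by a continuous compactly supported function to control $\int\abs{g\circ y_n-g\circ y}\,y_\xi y_{n\xi}\,d\xi$; you instead apply the same weak-plus-norm device you use for $U_{n\xi}$, with the norm convergence obtained from the explicit Eulerian identity $\norm{\nu_n}_{L^2}^2=(1+h_n)\int_0^1 v_n^2/(1+v_n)\,dx=(1+h_n)\bigl[h_n-\int_0^1 v_n/(1+v_n)\,dx\bigr]$ and convergence in measure. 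Likewise, for $\norm{U_{n\xi}}_{L^2}\to\norm{U_\xi}_{L^2}$ the paper deduces the norm convergence from the Lagrangian constraint \eqref{eq:lagcoord3} together with the already established $L^2$ convergences of $y_{n\xi}$, $\nu_n$, $U_n$, whereas you again compute the norm in Eulerian variables by change of variables. Your route is more unified (one mechanism handles both $\nu_n$ and $U_{n\xi}$) and avoids the continuous-approximation argument, at the price of leaning on the change-of-variables formula and the inverse maps $y_n^{-1}$; the paper's route for $\nu_n$ is more elementary and stays entirely in the Lagrangian variables.

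Two steps are asserted rather than proved, but both are true and at the same level of implicitness as the paper's own write-up: (i) $\chi\circ y_n^{-1}\to\chi\circ y^{-1}$ uniformly requires $y_n^{-1}\to y^{-1}$ uniformly, which holds because the $y_n$, $y$ are continuous strictly increasing bijections (the measures have no atoms) with $y_n\to y$ uniformly and $y^{-1}$ continuous — the paper uses the same fact tacitly when it passes $\phi\circ y_n^{-1}\to\phi\circ y^{-1}$ through dominated convergence; (ii) the statement that the shift $\Pi_2$ with $a_n\to a$ preserves $E$-convergence needs continuity of translation in $L^1$ for the $\nu$, $y_\xi$, $U_\xi$ components and the Lipschitz/uniform continuity of $y$ and $U$ for the sup-norm components, which is exactly the argument the paper carries out for $U(\xi-a_n)$ and $U_\xi(\xi-a_n)$ at the end of its proof. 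Neither point is a genuine gap.
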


\begin{proof}
  Let $X_n=(y_n, U_n,\nu_n)$ be the image of
  $(u_n, (u_n^2+u_{n,x}^2)dx)$ given as in
  \eqref{def:EtoF} and $X=(y,U,\nu)$ the image of
  $(u,(u^2+u_x^2)dx)$ given as in
  \eqref{def:EtoF}.  We will at first prove that
  $u_n$ converges to $u$ in $H^1_{\rm per}$ implies
  that $X_n$ converges against $X$ in $E$.  Denote
  $g_n=u_n^2+u_{nx}^2$ and $g=u^2+u_x^2$, then
  $g_n$ and $g$ are periodic functions. Moreover,
  as $X_n$, $X\in\F_0$, we have
  $y_{n,\xi}+\nu_n=1+h_n$ and $y_\xi+\nu=1+h$,
  where $h_n=\norm{\nu_n}_{L^1}$ and
  $h=\norm{\nu}_{L^1}$. By Definition~\ref{defL},
  we have that $y_n(0)=0$ and $y(0)=0$, and hence
  \begin{align}
    \int_0^{y_n(\xi)} g_n(x)dx+y_n(\xi)&=\int_0^\xi \nu_n(x)dx +y_n(\xi) =(1+h_n)\xi, \\ \nn 
    \int_0^{y(\xi)} g(x)dx +y(\xi)&=\int_0^\xi \nu(x)dx +y(\xi) =(1+h)\xi.
  \end{align}
  By assumption $u_n \to u$ in $H^1_{per}$, which implies that $u_n\to u$ in $L^\infty$, $g_n\to g$ in $L^1$, and $h_n\to h$. 
  Therefore we also obtain that $y_n\to y$ in $L^\infty$. 
  We have 
  \begin{equation}
    U_n-U=u_n\circ y_n -u\circ y=u_n\circ y_n-u\circ y_n+u\circ y_n-u\circ y.
  \end{equation}
  Then, since $u_n\to u$ in $L^\infty$, also
  $u_n\circ y_n\to u\circ y_n$ in $L^\infty$ and
  as $u$ is in $H^1_{per}$, we also
  obtain that $u\circ y_n\to u\circ y$ in
  $L^\infty$. Hence, it follows that $U_n\to U$ in
  $L^\infty$.  By definition, the measures
  $(u^2+u_x^2)dx$ and $(u_n^2+u_{nx}^2)dx$ have no
  singular part, and we therefore have almost
  everywhere
  \begin{equation}
    y_\xi=\frac{1+h}{1+g\circ y} \quad \text{ and } \quad y_{n\xi}=\frac{1+h_n}{1+g_n\circ y_n}.
  \end{equation}
  Hence 
  \begin{align}\label{approx:zeta1}
    y_\xi-y_{n\xi} & = y_\xi y_{n\xi}\Big(\frac{1+g_n\circ y_n}{1+h_n}-\frac{1+g\circ y}{1+h}\Big)\\ \nn
    & = y_\xi y_{n\xi}\Big(\frac{1+g_n\circ y_n}{1+h_n}-\frac{1+g_n\circ y_n}{1+h} \Big)\\
    & \qquad+ \frac{y_\xi y_{n\xi}}{1+h}(g_n\circ y_n-g\circ y_n+g\circ y_n -g\circ y). \nn
  \end{align}
  In order to show that $\zeta_{n,\xi}\to\zeta_\xi$ in $L^1_{\rm per}$, it suffices to investigate 
  \begin{equation}
    \int_0^1 \vert  g\circ y_n-g\circ y\vert y_\xi y_{n,\xi}d\xi, 
  \end{equation}
  and 
  \begin{equation}
    \int_0^1\vert g_n\circ y_n-g\circ y_n \vert y_\xi y_{n,\xi} d\xi,
  \end{equation}
  as we already know that $h_n\to h$ and therefore $y_{n,\xi}$ and $y_\xi$ are bounded.
  Since $0\leq y_\xi\leq 1+h$, we have
  \begin{equation}\label{approx:zeta2}
    \int_0^1 \vert g\circ y_n-g_n\circ y_n\vert y_\xi y_{n,\xi}d\xi\leq (1+h) \norm{g-g_n}_{L^1}. 
  \end{equation}
  For the second term, let $C=\sup_{n}(1+h_n)\geq
  1$.  Then for any $\varepsilon>0$ there exists a
  continuous function $v$ with compact support
  such that $\norm{g-v}_{L^1}\leq
  \varepsilon/3C^2$ and we can make the following
  decomposition
  \begin{align}\label{approx:zeta3}
    (g\circ y-g\circ y_n)y_{n,\xi}y_\xi&=(g\circ y-v\circ y)y_{n,\xi}y_\xi\\ \nn
    &\quad+(v\circ y-v\circ y_n)y_{n,\xi}y_\xi+(v\circ y_n-g\circ y_n)y_{n,\xi}y_\xi.
  \end{align}
  This implies 
  \begin{equation}
    \int_0^1 \vert g\circ y-v\circ y\vert y_{n,\xi}y_\xi d\xi\leq C\int_0^1 \vert g\circ y-v\circ y\vert y_\xi d\xi\leq \varepsilon/3,  
  \end{equation}
  and analogously we obtain $\int_0^1 \vert g\circ y_n-v\circ y_n\vert y_{n,\xi}y_\xi d\xi\leq \varepsilon/3$. As $y_n\to y$ in $L^\infty$ and $v$ is continuous, we obtain, by applying the Lebesgue dominated convergence theorem, that $v\circ y_n\to v\circ y$ in $L^1$, and we can choose $n$ so big that 
  \begin{equation}
    \int_0^1 \vert v\circ y_n-v\circ y\vert y_{n,\xi}y_\xi d\xi\leq C^2 \norm{v\circ y-v\circ y_n}_{L^1}\leq \varepsilon/3.  
  \end{equation}
  Hence, we showed, that $\int_0^1 \vert g\circ y-g\circ y_n\vert y_{n,\xi}y_\xi d\xi\leq \varepsilon$ and therefore, using \eqref{approx:zeta3},
  \begin{equation}\label{approx:zeta4}
    \lim_{n\to\infty}\int_0^1 \vert g\circ y-g\circ y_n\vert y_{n,\xi}y_\xi d\xi=0.
  \end{equation}
  Combing now \eqref{approx:zeta1}, \eqref{approx:zeta2}, and \eqref{approx:zeta3}, yields $\zeta_{n\xi}\to \zeta_\xi$ in $L^1$, 
  and therefore also $\nu_n\to\nu$ in $L^1$. 
  Because $\zeta_{n,\xi}$ and $\nu_n$ are bounded in $L^\infty$, we also have that $\zeta_{n,\xi}\to\zeta_\xi$ in $L^2$ and $\nu_n\to\nu$ in $L^2$. Since $y_{n,\xi}$, $\nu_n$ and $U_n$ tend to $y_\xi$, $\nu$ and $U$ in $L^2$ and $\norm{U_n}_{L^\infty}$ and $\norm{y_{n,\xi}}_{L^\infty}$, are uniformly bounded, it follows from \eqref{eq:lagcoord3} that 
  \begin{equation}
    \lim_{n\to\infty}\norm{U_{n,\xi}}_{L^2}=\norm{U_\xi}_{L^2}.
  \end{equation}
  Once we have proved that $U_{n,\xi}$ converges weakly to $U_\xi$, this will imply that $U_{n,\xi}\to U_\xi$ in $L^2$.
  For any smooth function $\phi$ with compact support in $[0,1]$ we have 
  \begin{equation}
    \int_\Real U_{n,\xi}\phi d\xi=\int_\Real  u_{n,x}\circ y_ny_{n,\xi}\phi d\xi=\int_\Real u_{n,x}\phi\circ y_n^{-1} d\xi. 
  \end{equation}
  By assumption we have $u_{n,\xi}\to u_\xi$ in $L^2$. Moreover, since $y_n\to y$ in $L^\infty$, the support of $\phi\circ y_n^{-1}$ is contained in some compact set, which can be chosen independently of $n$. Thus, using Lebesgue's dominated convergence theorem, we obtain that $\phi\circ y_n^{-1}\to \phi\circ y^{-1}$ in $L^2$ and therefore 
  \begin{equation}\label{eq:UxiL2}
    \lim_{n\to\infty}\int_\Real U_{n,\xi}\phi d\xi=\int_\Real  u_x\phi\circ y^{-1}d\xi=\int_\Real U_\xi\phi d\xi.
  \end{equation}
  Form \eqref{eq:lagcoord3} we know that $U_{n,\xi}$ is bounded and therefore by a density argument \eqref{eq:UxiL2} holds for any function $\phi$ in $L^2$ and therefore $U_{n,\xi}\to U_\xi$ weakly and hence also in $L^2$. 
  Using now that 
  \begin{equation}
    \norm{U_{n,\xi}-U_\xi}_{L^1}\leq \norm{U_{n,\xi}-U_\xi}_{L^2},
  \end{equation}
  shows that we also have convergence in $L^1$. 
  Thus we obtained
  that $X_n\to X$ in $E$.  As a second and last
  step, we will show that $\Pi_2$ is continuous,
  which then finishes the proof.  We already know
  that $y_n\to y$ in $L^\infty$ and therefore
  $a_n=\int_0^1 y_n(\xi)d\xi$ converges to
  $a=\int_0^1 y(\xi)d\xi$.  Thus we obtain as an
  immediate consequence
  \begin{multline} 
    \norm{U_n(\xi-a_n)-U(\xi-a)}_{L^\infty}\\
    \leq \norm{U_n(\xi-a_n)-U(\xi-a_n)}_{L^\infty}+\norm{U(\xi-a_n)-U(\xi-a)}_{L^\infty},
  \end{multline}
  and hence the same argumentation as before shows that $U_n(\xi-a_n)\to U(\xi-a)$ in $L^\infty$. 
  Moreover, 
  \begin{align}
    \int_0^1 \vert U_{n,\xi}(\xi-a_n)& -U_\xi(\xi-a)\vert d\xi \\ \nn
    & \leq \int_0^1 \vert U_{n,\xi}(\xi-a_n)-U_\xi (\xi-a_n)\vert d\xi +\int_0^1 \vert U_\xi (\xi-a_n)-U_\xi(\xi-a)\vert d\xi\\ \nn
    & \leq \norm{U_{n,\xi}-U_\xi}_{L^1}+\norm{U_\xi(\xi-a_n)-U_\xi(\xi-a)}_{L^1},
  \end{align}
  and again using the same ideas as in the first part of the proof, we have that 
  $U_{n,\xi}(\xi-a_n)\to U_{\xi}(\xi-a)$ in $L^1$, which finally proves the claim, because of \eqref{eq:dequiv}
\end{proof}

\begin{proposition} \label{prop:cont2}
  Let $(u_n, \mu_n)$ be a sequence in
  $\D$ that converges to $(u,\mu)$ in
  $\D$. Then
  \begin{equation}
    u_n\rightarrow u \text{ in } L^\infty_{\rm per} \text{ and } \mu_n \overset{\ast}{\rightharpoonup}\mu.
  \end{equation}
\end{proposition}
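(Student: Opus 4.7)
Let $X_n = L(u_n,\mu_n) = (y_n, U_n, \nu_n)$ and $X = L(u,\mu) = (y, U, \nu)$, both in $\H$, so that the hypothesis reads $d(X_n, X) \to 0$. The plan is to transfer convergence from $d$ to the restricted metric $d_M$ so that Lemma~\ref{lem:LinfbdJ} applies, then extract the two stated convergences. As a preliminary, I would show $h_n := \norm{\nu_n}_{L^1} \to h := \norm{\nu}_{L^1}$: the change-of-variables bound from the proof of Lemma~\ref{lem:LinfbdJ} gives $|h_\alpha - h_\beta| \leq \norm{X_\alpha \act f - X_\beta \act g}_E$ for all $f, g \in \Gr$, hence $|h_\alpha - h_\beta| \leq J(X_\alpha, X_\beta)$, and telescoping yields $|h_\alpha - h_\beta| \leq d(X_\alpha, X_\beta)$. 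So $h_n \to h$ and, for $n$ large, $X_n, X \in \H^M$ with $M := h + 1$.

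The crux of the argument is to upgrade $d(X_n, X) \to 0$ to $d_M(X_n, X) \to 0$. Given $\epsi \in (0, 1/2)$ and $n$ so large that $d(X_n,X) + \epsi < 1/2$, choose $\{Z_k\}_{k=0}^K \subset \F$ with $Z_0 = X$, $Z_K = X_n$ and $\sum_k J(Z_{k-1}, Z_k) \leq d(X_n, X) + \epsi$. The same telescoping on total energies forces each intermediate $Z_k$ to have energy at most $M$. Set $\tilde Z_k := \Pi(Z_k) \in \H$; since $\Pi$ preserves total energy, $\tilde Z_k \in \H^M$. Because $\Pi$ is a composition of relabelings, the invariance \eqref{eq:invralJ} gives $J(\tilde Z_{k-1}, \tilde Z_k) = J(Z_{k-1}, Z_k)$, while $\Pi X = X$ and $\Pi X_n = X_n$ since $\Pi$ is the identity on $\H$. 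Thus $d_M(X_n, X) \leq d(X_n, X) + \epsi$, so $d_M(X_n, X) \to 0$, and Lemma~\ref{lem:LinfbdJ} yields $\norm{y_n - y}_{L^\infty} + \norm{U_n - U}_{L^\infty} + |h_n - h| \to 0$.

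For $u_n \to u$ in $L^\infty_{\rm per}$ I use that $u \in H^1_{\rm per}$ is uniformly continuous. Given $x \in \Real$, pick $\xi$ with $y_n(\xi) = x$; then $u_n(x) = U_n(\xi)$ and $u(x) = u(y_n(\xi))$, so
\[
|u_n(x) - u(x)| \leq |U_n(\xi) - U(\xi)| + |u(y(\xi)) - u(y_n(\xi))| \leq \norm{U_n - U}_{L^\infty} + \omega_u(\norm{y_n - y}_{L^\infty}),
\]
with $\omega_u$ the modulus of continuity of $u$; both terms vanish uniformly in $x$.

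For $\mu_n \overset{\ast}{\rightharpoonup} \mu$, I test against $\phi \in C_{\rm per}(\Real)$. Using $\nu_n = (1 + h_n) - y_{n,\xi}$ (since $X_n \in \H$) together with the change of variables $x = y_n(\xi)$ and the $1$-periodicity of $\phi$,
\[
\int_0^1 \phi \, d\mu_n = \int_0^1 \phi(y_n)\nu_n \, d\xi = (1 + h_n)\int_0^1 \phi(y_n)\,d\xi - \int_0^1 \phi(x)\,dx,
\]
and the identical formula holds for $\mu$; then $h_n \to h$ combined with $\phi(y_n) \to \phi(y)$ uniformly (from the previous step and continuity of $\phi$) delivers convergence, and the extension to arbitrary $\phi \in C_c(\Real)$ reduces to periodic test functions by summing $\phi(\,\cdot\, + k)$ over periods. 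I expect the main obstacle to be the second paragraph: one must bridge the definition of $d$ (which allows intermediate points anywhere in $\F$) and that of $d_M$ (restricted to $\H^M$), and the $\Pi$-projection trick works only because $\Pi$ preserves the total energy and $J$ is relabeling invariant, both of which depend on the a priori energy bound obtained in the first step.
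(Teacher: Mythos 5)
Your proof is correct, and its skeleton is the same as the paper's: pass to $X_n=L(u_n,\mu_n)$, $X=L(u,\mu)$, invoke Lemma~\ref{lem:LinfbdJ} to get $\norm{y_n-y}_{L^\infty}+\norm{U_n-U}_{L^\infty}+\abs{h_n-h}\to0$, and then read off the Eulerian convergences. Where you differ is in completeness rather than in strategy. The paper's proof is essentially two lines: it applies Lemma~\ref{lem:LinfbdJ} directly to the hypothesis $d(X_n,X)\to0$ (silently ignoring that the lemma is stated for $d_M$ on $\H^M$) and then cites \cite[Proposition 5.2]{HolRay:07a} for the remaining Eulerian arguments. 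Your second paragraph supplies exactly the missing bridge: the telescoped bound $\abs{h_\alpha-h_\beta}\leq d(X_\alpha,X_\beta)$ puts everything in $\H^M$ for large $n$, and projecting an almost-optimal chain by $\Pi$ (which is a relabeling, preserves $\norm{\nu}_{L^1}$, and fixes $\H$) together with \eqref{eq:invralJ} yields $d_M(X_n,X)\leq d(X_n,X)+\epsi$, so the lemma genuinely applies; this is a point the paper does not address. Your Eulerian half replaces the citation with a self-contained argument; in particular the identity $\int_0^1\phi\,d\mu_n=(1+h_n)\int_0^1\phi\circ y_n\,d\xi-\int_0^1\phi\,dx$, obtained from $\nu_n=(1+h_n)-y_{n,\xi}$ on $\H$ and the change of variables, is a nice device because it lets you prove $\mu_n\overset{\ast}{\rightharpoonup}\mu$ using only the $L^\infty$ convergence of $y_n$ and $h_n\to h$, without any convergence of $\nu_n$ or $y_{n,\xi}$ themselves (which the metric does not control). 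The only points worth stating explicitly are the surjectivity of $y_n$ (clear since $y_n\in V_1$ is continuous and nondecreasing) and the periodicity of $\mu_n$ when you shift the integration period from $[y_n(0),y_n(0)+1)$ to $[0,1)$ and when you reduce $C_c(\Real)$ test functions to periodic ones; both are routine.
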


\begin{proof}

  Let $X_n=(y_n, U_n, \nu_n)=L(u_n,\mu_n)$
  and $X=(y,U,\nu)=L(u,\mu)$ .  By the
  definition of the metric $d_\D$, we
  have $\lim_{n\to\infty}d(X_n,X)=0$. We immediately obtain 
  that
  \begin{equation}
    X_n \to X \text{ in } L^\infty(\Real),
  \end{equation}
  by Lemma~\ref{lem:LinfbdJ}.  The rest can be
  proved as in \cite[Proposition 5.2]{HolRay:07a}.
\end{proof}

\noindent{\bf Acknowledgments.} 
K. G. gratefully acknowledges the hospitality of
the Department of Mathematical Sciences at the
NTNU, Norway, creating a great working environment
for research during the fall of 2009.

\end{document}